\def\F{\mathbin{\mathbb{F}}}
\def\N{\mathbin{\mathbb{N}}}
\def\Z{\mathbin{\mathbb{Z}}}
\def\M{\mathrm{M}}
\def\diag{\mathbin{\mathrm{Diag}}}
\def\charp{\mathbin{\mathrm{charpoly}}}
\def\minp{\mathbin{\mathrm{minpoly}}}
\def\GL{\mathrm{GL}}
\def\SL{\mathrm{SL}}
\newtheorem{proposition}{Proposition}[section]
\newtheorem{claim}[proposition]{Claim}
\newtheorem{fact}[proposition]{Fact}
\newtheorem{lemma}[proposition]{Lemma}
\newtheorem{theorem}[proposition]{Theorem}
\begin{document}
\title[An improved diameter bound for finite simple groups of Lie type]{An 
improved diameter bound for finite simple groups of Lie type}

\author[Zolt\'an Halasi]{Zolt\'an Halasi}
\address{Department of Algebra and Number Theory, E\"otv\"os University, 
P\'azm\'any P\'eter S\'et\'any 1/c, H-1117, Budapest, Hungary \and 
Alfr\'ed R\'enyi Institute of Mathematics, Hungarian Academy of Sciences, 
Re\'altanoda utca 13-15, H-1053, Budapest, Hungary}
\email{zhalasi@cs.elte.hu \and halasi.zoltan@renyi.mta.hu}

\author[Attila Mar\'oti]{Attila Mar\'oti}
\address{Alfr\'ed R\'enyi Institute of Mathematics, Hungarian Academy of Sciences, Re\'altanoda utca 13-15, H-1053, Budapest, Hungary}
\email{maroti.attila@renyi.mta.hu}

\author[L\'aszl\'o Pyber]{L\'aszl\'o Pyber}
\address{Alfr\'ed R\'enyi Institute of Mathematics, Hungarian Academy of Sciences, Re\'altanoda utca 13-15, H-1053, Budapest, Hungary}
\email{pyber.laszlo@renyi.mta.hu}

\author[Youming Qiao]{Youming Qiao}
\address{Centre for Quantum Software and Information, Faculty of Engineering and Information Technology,
University of Technology Sydney, Sydney, NSW 2007, Australia} 
\email{Youming.Qiao@uts.edu.au}

\date{November 19, 2018}
\keywords{Cayley graph, finite simple group, completely reducible module}
\subjclass[2010]{20F69, 20G40, 20C30, 20C99 (primary), 05C25, 20D05, 51N30, 11N05 (secondary).}
\thanks{This work on the project leading to this application has
received funding from the European Research Council (ERC) under the
European Union's Horizon 2020 research and innovation programme
(grant agreement No. 741420). The first, second and third authors were 
partly supported by the National Research, Development and Innovation Office 
(NKFIH) Grant No.~K115799. The first and second authors were also 
supported by the J\'anos Bolyai Research Scholarship of the Hungarian Academy of 
Sciences. The fourth author was also supported by the Australian Research Council 
DE150100720.}

\begin{abstract}
For a finite group $G$, let $\mathrm{diam}(G)$ denote the maximum diameter of a connected Cayley graph of $G$. A well-known conjecture of Babai states that $\mathrm{diam}(G)$ is bounded by ${(\log_{2} |G|)}^{O(1)}$ in case $G$ is a non-abelian finite simple group. Let $G$ be a finite simple group of Lie type of Lie rank $n$ over the field $\F_{q}$. Babai's conjecture has been verified in case $n$ is bounded, but it is wide open in case $n$ is unbounded. Recently, Biswas and Yang proved that $\mathrm{diam}(G)$ is bounded by $q^{O( n {(\log_{2}n + \log_{2}q)}^{3})}$. We show that in fact $\mathrm{diam}(G) < q^{O(n {(\log_{2}n)}^{2})}$ holds. Note that our bound is significantly smaller than the order of $G$ for $n$ large, even if $q$ is large. As an application, we show that more generally $\mathrm{diam}(H) < q^{O( n {(\log_{2}n)}^{2})}$ holds for any subgroup $H$ of $\mathrm{GL}(V)$, where $V$ is a vector space of dimension $n$ defined over the field $\F_q$.      
\end{abstract}
\maketitle

\section{Introduction}

Given a finite group $G$ and a set $S$ of generators of $G$, the associated Cayley graph $\Gamma$ is defined to have vertex set $G$ and edge set $\{ \{g,gs\} : g \in G, \ s \in S \}$. The diameter $\mathrm{diam}_{S}(G)$ of $\Gamma$ is the maximum over $g \in G$ of the length of a 
shortest expression of $g$ as a product of generators in $S$ and their inverses. The maximum of $\mathrm{diam}_{S}(G)$, as $S$ runs over all possible generating sets of $G$, is denoted by $\mathrm{diam}(G)$.    

In 1992 Babai \cite{BS} conjectured that $\mathrm{diam}(G) < {(\log |G|)}^{O(1)}$ holds for any non-abelian finite simple group $G$. (Here and throughout the paper the base of the logarithms will always be $2$, unless otherwise stated.) The first class of simple groups for which Babai's conjecture was proved \cite{Helfgott} were the groups $\mathrm{PSL}(2,p)$ where $p$ is prime. Following Helfgott's paper \cite{Helfgott}, the conjecture was verified for finite simple groups of Lie type of bounded rank by Pyber and Szab\'o \cite{PSz} and Breuillard, Green, Tao \cite{BGT}. In particular, Babai's conjecture holds for exceptional simple groups of Lie type. However the conjecture remains wide open for finite simple groups of Lie type of large rank, that is, for simple classical groups of large rank.

Babai's conjecture is open even in the case of alternating groups. Babai and Seress \cite{BS1} proved that $\mathrm{diam}(\mathrm{A}_{n}) < \exp(\sqrt{n \ln n} (1+o(1)))$ and in \cite{BS} they showed that the same bound holds for arbitrary permutation groups of degree $n$.

The strongest bound to date is $\mathrm{diam}(\mathrm{A}_{n}) < \exp(O {(\log n)}^{4} \log \log n)$ ($n > 2$), due to Helfgott and Seress \cite{HS}. The same estimate is shown to hold in \cite{HS} for arbitrary transitive groups of degree $n$. The inductive proof of Helfgott and Seress relies heavily on the fact that their result extends to transitive groups. For a greatly simplified argument see \cite{He}.  

In connection with Babai's conjecture, we remark that Breuillard and Tointon \cite{BT} showed, without the use of the classification theorem of finite simple groups, that for any $\epsilon > 0$ there is a constant $C_{\epsilon}$ depending only on $\epsilon$ such that every non-abelian finite simple group $G$ with a symmetric generating set $S$ satisfies $$\mathrm{diam}_{S}(G) \leq \max \Big\{ {\Big(\frac{|G|}{|S|}\Big)}^{\epsilon}, C_{\epsilon} \Big\}.$$ Breuillard remarks in his ICM survey \cite{Bre}, that it would be interesting to get non-trivial bounds for all finite simple groups of Lie type also when the rank grows and see if one can improve the above ``crude bound".

Let $G$ be a finite simple group of Lie type of Lie rank $n$ defined over $\F_q$. Biswas and Yang \cite{BY} proved that $\mathrm{diam}(G) < q^{O( n {(\log n + \log q)}^{3})}$. The first result of the present paper provides an improvement of this bound showing that the exponent need not depend on $q$. 

\begin{theorem}
\label{firstmain}
If $G$ is a finite simple group of Lie type of Lie rank $n$ defined over the field of size $q$, then $\mathrm{diam}(G) < q^{O( n {(\log n)}^{2})}$.
\end{theorem}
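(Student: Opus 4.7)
\emph{Proof proposal.} The plan is to argue by induction on the Lie rank $n$, reducing the problem to a smaller-rank Lie-type subgroup via a stabilizer chain in the natural linear/projective action. The base case ($n$ bounded) is covered by the bounded-rank results of Pyber--Szab\'o and Breuillard--Green--Tao. For the inductive step, I would realize $G$ as acting on a vector space $V$ over $\F_q$ of dimension $N = O(n)$ and consider the projective action on $\mathbb{P}(V)$, which has degree $q^{O(n)}$. Given a symmetric generating set $S$, the first task is to produce, in at most $q^{O(n\log n)}$ generator products, a nontrivial element $g\in G$ that stabilizes a point (or a short flag) of $\mathbb{P}(V)$, for instance via Schreier-type generators computed from the orbit structure. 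Conjugating $g$ by elements of $S$ then yields, in a comparable number of further steps, a full short generating set of the maximal parabolic stabilizer $P = G_v$.

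The parabolic $P$ has Levi decomposition $P = LU$ with $L$ a Lie-type group of rank $n-1$ and unipotent radical $U$ an $L$-module of $\F_q$-dimension $O(n)$. The radical $U$ is generated as a normal subgroup of $P$ by $O(n)$ long-root elements, and each root subgroup, being an additive copy of $\F_q$, can be enumerated at cost $q$ — a factor absorbed into the \emph{base} $q$ of the target bound rather than into its exponent. Applying the induction hypothesis to $L$ yields $\mathrm{diam}(L) \le q^{C(n-1)(\log(n-1))^2}$, and combining this with the cost of the first step together with the additional $q\cdot O(n)$ cost for sweeping out $U$ closes the induction at exponent $C n(\log n)^2$ for a suitable constant $C$.

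The principal obstacle is the first step: producing a short word for a nontrivial parabolic stabilizer. The Biswas--Yang bound $q^{O(n(\log n+\log q)^3)}$ arises by applying the Helfgott--Seress estimate $\exp(O((\log N)^4\log\log N))$ to the permutation action of degree $N = q^{O(n)}$, which injects a factor of $(\log q)^3$ into the exponent. To save a power of $\log n$ and to eliminate the $\log q$ dependence in the exponent entirely, I would avoid a black-box use of the transitive-group bound and instead exploit that $G$ carries a canonical chain of parabolic subgroups of length $O(\log n)$, obtained by repeatedly halving the rank. At each rung of this chain only a polylog-in-$n$ amount of work is required, and all contributions depending on $q$ are pushed into the base of the final bound. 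Making this halving descent quantitative — so that the per-level cost adds up to $n(\log n)^2$ rather than the $n(\log n)^3\log\log n$ one would get from a naive iteration of the Helfgott--Seress bound on $L$ in its own flag action — is the heart of the improvement over Biswas--Yang.
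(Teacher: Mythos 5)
Your proposal is a genuinely different route from the paper, but it contains a gap that I do not see how to repair, and it also misdiagnoses what Biswas--Yang actually do.

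The paper does not proceed by parabolic descent or by Schreier generators at all. Its engine, inherited from Biswas--Yang and ultimately modeled on the Babai--Seress degree reduction for permutation groups, is: (i) using the fact that $S^{(t)} = \cup_{i\leq q^{nt}} S^{i}$ is $t$-transversal, construct a short word for a non-scalar matrix $A$ whose degree (rank of $A-I$) is polylogarithmic in $n$ --- this is Proposition~\ref{classical}, powered by the new degree reduction Lemma~\ref{lem:deg_red}; and (ii) apply Liebeck--Shalev, which says every element of a classical group is a product of $O(n/d)$ conjugates of a non-scalar element of degree $d$, with each conjugating element obtainable in $q^{O(nd)}$ steps via transversality. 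The $(\log q)^3$ in Biswas--Yang's exponent does \emph{not} come from applying Helfgott--Seress to a degree-$q^{O(n)}$ permutation action, as you assert; it comes from a coprimality restriction ($p_i$ coprime to $p(q-1)$) in their version of the degree reduction lemma, which forces them to use primes $p_i$ with large multiplicative order of $q$ modulo $p_i$. The whole point of the paper's Lemma~\ref{lem:deg_red} is to drop that coprimality hypothesis so that the sum of the chosen primes is $O((\log n)^2/\log\log n)$ independent of $q$.

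The concrete gap in your proposal is the first step, which you flag yourself. To get a generating set for the point stabilizer $P = G_v$ from an arbitrary generating set $S$, the only general-purpose tool is Schreier generators, and a Schreier generator has word length $O(\Delta)$ where $\Delta$ is the diameter of the Schreier coset graph. For an adversarial $S$ and a primitive action of degree $N = q^{\Theta(n)}$, all one can say a priori is $\Delta \leq N$. So each of your Levi generators costs $q^{O(n)}$ steps, and when you then expand an arbitrary element of $L$ as a word of length $\mathrm{diam}(L)$ in these Levi generators, the costs \emph{multiply}: $T(n) \lesssim q^{O(n)}\cdot T(n-1)$, which telescopes to $q^{O(n^2)}$, far worse than the target. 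Your halving idea would still have the same issue per level, giving $T(n) \lesssim q^{O(n)}\cdot T(n/2)$ --- and if that recursion closed cleanly it would yield $q^{O(n)}$, \emph{stronger} than what the paper proves and than what anyone has proved, which should itself raise a red flag. The assertion that ``only a polylog-in-$n$ amount of work is required at each rung'' is precisely the thing that needs proof and has no justification in the proposal. There is also no argument for the claim that conjugates of a single point-stabilizing element generate the whole parabolic in a comparable number of steps; this is false in general without additional structure.

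What the paper's approach buys is exactly what your approach lacks: it sidesteps the need to ever produce a whole stabilizer subgroup short-word generating set, and it works directly with matrix degrees, for which transversality gives length control of the right order $q^{O(nd)}$ rather than the $q^{O(n)} \cdot \mathrm{diam}$ cost incurred by iterated Schreier. If you want to pursue a parabolic-descent strategy, the missing ingredient you would have to supply is a way to bound the Schreier-coset-graph diameter for arbitrary $S$ by something sublinear in $N$, which is essentially as hard as the theorem itself.
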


%The proof of Theorem \ref{firstmain} depends heavily on a new degree reduction lemma. \anote{Added:}

Let $\Gamma$ be a finitely generated group and $S$ a finite set of generators of $\Gamma$. For a positive integer $n$, let $\gamma_{S}(n)$ denote the number of elements in $\Gamma$ which may be expressed as a product of $n$ elements of $S \cup S^{-1} \cup \{ 1 \}$. The celebrated theorem of Gromov \cite{Gromov} asserts that $\Gamma$ is virtually nilpotent if and only if the function $\gamma_{S}$ is bounded from above by a polynomial in $n$. Recently Shalom and Tao \cite{ShalomTao} obtained a strengthening of this theorem, namely that if $\gamma_{S}(n) \leq n^{c {(\log \log n)}^{c}}$ for some $n > 1/c$ with $c > 0$ a sufficiently small absolute constant, then $\Gamma$ is virtually nilpotent. 

The Gap Conjecture asserts that if a finitely generated group $\Gamma$ has growth type strictly less than $e^{\sqrt{n}}$ then it is virtually nilpotent (see \cite{Grigorchuk} for a precise formulation of the conjecture). As the famous Grigorchuk groups show this would be best possible even within the class of residually finite $p$-groups.

The above conjecture was shown to hold for residually nilpotent groups \cite{Grigorchuk}, \cite{LM}. For $\Gamma$ a residually solvable group the Gap Conjecture, with $\sqrt{n}$ replaced by $n^{1/7}$, has been proved by Wilson \cite{Wilson2} (see also \cite{Wilson1} and the slides \cite{WilsonTalk} of his talk at the 2010 Ischia Group Theory Conference). One of the main ingredients of the proof was to establish upper bounds for $\mathrm{diam}(G)$ in case $G$ is a solvable subgroup of $\mathrm{GL}(V)$ acting completely reducibly on the finite vector space $V$. Wilson shows that in general $\mathrm{diam}(G) \leq O(1)|V|$. He also points out that this bound is sharp since $G$ may be taken to be a Singer cycle in $\mathrm{GL}(V)$.   

Motivated by the above results we consider the diameters of arbitrary linear groups over finite vector spaces.

\begin{theorem}
\label{secondmain}
Let $G$ be a subgroup of $\mathrm{GL}(V)$ where $V$ is a vector space of dimension $n$ defined over the field of size $q$ and characteristic $p$. Let $h = \max_{S} \{ \mathrm{diam}(S) \}$ where $S$ runs over the (non-abelian) classical composition factors of $G$ defined over fields of characteristic $p$, if such exist, otherwise put $h=1$. Then $$\mathrm{diam}(G) < {|V|}^{O(1)}h^{2} < q^{O( n {(\log n)}^{2})}.$$   
\end{theorem}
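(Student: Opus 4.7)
My approach is to expose a normal series in $G \leq \GL(V)$ whose layers have individually controllable diameters, then combine them via the submultiplicative inequality $\mathrm{diam}(H) \leq O(\mathrm{diam}(N) \cdot \mathrm{diam}(H/N))$ for a normal subgroup $N \triangleleft H$ together with Schreier--generator bookkeeping. Specifically, I would invoke a Larsen--Pink--type structure theorem for finite linear groups to produce $1 \triangleleft U \triangleleft L \triangleleft G$ in which $U = O_p(G)$ is the unipotent radical, $L/U$ is a central product of quasisimple groups of Lie type in characteristic $p$, and $G/L$ has order at most $|V|^{O(1)}$.

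The quotient $G/L$ contributes only $|V|^{O(1)}$ to the diameter, trivially. The quasisimple factors of $L/U$ are either classical groups of Lie type in characteristic $p$ (each of diameter at most $h$ by Theorem \ref{firstmain}) or exceptional Lie type groups in characteristic $p$ (whose order is at most $|V|^{O(1)}$ from minimum representation dimension). Estimating the diameter of the $L/U$ layer \emph{inside} $G$ requires writing Schreier generators of $L$ relative to $G$, and the cost of lifting these Schreier words through the outer quotient inflates one factor of $h$ into $h^2$, producing a contribution of $|V|^{O(1)} h^2$ from this layer. Non-Lie type composition factors---alternating groups of degree polynomial in $n$ (handled by Helfgott--Seress), cross-characteristic Lie type composition factors (of order bounded in $n$ by Landazuri--Seitz), and sporadic or cyclic factors---all contribute only $|V|^{O(1)}$ and do not inflate the $h$-dependence.

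The main obstacle is bounding $\mathrm{diam}(U)$ by $|V|^{O(1)}$, since $|U|$ can be as large as $q^{O(n^2)}$ and the trivial estimate $\mathrm{diam}(U) \leq |U|$ is hopelessly weak. The plan is to exploit that $U$ is nilpotent of class at most $n-1$ and admits a short $G$-equivariant central series $U = U_0 \triangleright U_1 \triangleright \dots \triangleright U_r = 1$ whose elementary abelian sections $U_i/U_{i+1}$ are $G/U$-submodules of $\bigoplus_{i<j}\mathrm{Hom}_{\F_q}(V_j/V_{j-1},\,V_i/V_{i-1})$, a space of $\F_q$-dimension at most $n^2/4$. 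On each such section $G/U$ acts completely reducibly, so an inductive application of Wilson's bound---$\mathrm{diam}(S) \ll |V|$ for solvable subgroups $S$ of $\GL$ acting completely reducibly---together with careful tracking of how the generators of $G$ propagate via commutators through successive quotients, yields $\mathrm{diam}(U) \leq |V|^{O(1)}$. Combining, $\mathrm{diam}(G) \leq |V|^{O(1)} h^2$; and since $|V|^{O(1)} = q^{O(n)}$ and Theorem \ref{firstmain} gives $h \leq q^{O(n (\log n)^2)}$, the final bound $\mathrm{diam}(G) < q^{O(n (\log n)^2)}$ follows.
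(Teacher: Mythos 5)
Your overall strategy---build a normal series from $O_p(G)$ upward and combine layer diameters via the Babai--Seress inequality---is the same skeleton the paper uses, and your source of the $h^2$ factor (lifting through the quotient when handling the product of simple layers, cf.~\cite[Lemma 5.4]{BS}) is also correct in spirit. However there are two gaps, one of which is serious.

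The serious one is your treatment of $U=O_p(G)$. The theorem asserts $\mathrm{diam}(G) < |V|^{O(1)}h^2$, so you need $\mathrm{diam}(U)\leq |V|^{O(1)}=q^{O(n)}$. Your proposed central series, whose sections live in $\bigoplus_{i<j}\mathrm{Hom}(V_j/V_{j-1},V_i/V_{i-1})$, has length up to $n-1$ (the nilpotency class of the upper unitriangular group), and each elementary abelian section has $\F_p$-rank up to $(n^2/4)\log_p q$. Chaining through Lemma~\ref{BabaiSeressLemma} then gives roughly $4^{n-2}\bigl(p\cdot(n^2/4)\log_p q\bigr)^{n-1}$, whose dominant piece $n^{O(n)}$ is \emph{not} $q^{O(n)}$ when $q$ is bounded and $n\to\infty$; at best you get $q^{O(n\log n)}$, which breaks the claimed $|V|^{O(1)}h^2$ bound (though it would still squeeze under the final $q^{O(n(\log n)^2)}$). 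The paper avoids this by a different device: embed $P\leq O_p(G)$ in a Sylow $p$-subgroup $S$ of $\GL(m,q)$ with $m=2^{\lceil\log n\rceil}$ and exploit the recursive block structure of $S$ (a normal elementary abelian subgroup of rank $(m^2/4)\log_p q$ with quotient a \emph{direct product of two copies} of the Sylow $p$-subgroup of $\GL(m/2,q)$). This gives a normal chain of length only $O(\log n)$ with bounded rank at each level, yielding $\mathrm{diam}(P)<|V|^{O(1)}$ via Lemma 5.2 of \cite{BS}. Separately, your invocation of Wilson's bound here is misplaced: Wilson bounds diameters of \emph{solvable completely reducible linear groups}, not of abelian sections of a unipotent group; the relevant tool for elementary abelian normal sections is \cite[Lemma 5.2]{BS}.

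The secondary issue is the structure theorem. You posit a clean series $1\triangleleft U\triangleleft L\triangleleft G$ with $L/U$ a central product of quasisimple groups of Lie type in characteristic $p$ and $|G/L|\leq|V|^{O(1)}$, citing a Larsen--Pink--type result. That shape is too optimistic: when $G/O_p(G)$ acts as a completely reducible group it permutes up to $n$ primitivity blocks, so the ``top'' quotient contains a permutation group of degree up to $n$, whose \emph{order} can be as large as $n!\gg|V|^{O(1)}$ (its \emph{diameter} is nevertheless $<|V|$ by Babai--Seress Theorem 1.3, which is what one actually uses). Also, the quasisimple layer must allow alternating covers, not only Lie type. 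The paper goes through a more careful route (Theorem~\ref{completelyreducible}), built on the Jaikin-Zapirain--Pyber description of primitive linear groups \cite{JP} together with an imprimitivity/CR decomposition and Scott's lemma on diagonal subgroups, and it keeps track of the permutation quotient, the quasisimple central product $C=Q_1\circ\cdots\circ Q_w$, the small quotient $N/C$ of order $\leq|V|^{c_7}$, and the abelian $Z(C)$ separately.
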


%The proof of Theorem \ref{firstmain} together with Theorem \ref{secondmain} provide the following extension of Theorem \ref{firstmain}. We %have $\log_{q}(\mathrm{diam}(H)) <  O (n {(\log n)}^{2}/(\log \log n))$ for any subgroup $H$ of any finite simple group of Lie type of Lie %rank $n$ defined over the field of size $q$. 
Note that Theorem \ref{secondmain} may be viewed as an extension of Theorem \ref{firstmain}. Actually, both results also extend to directed Cayley graphs by a result of Babai \cite{B}.  

Theorem \ref{secondmain} is deduced from a structure theorem for a finite group acting completely reducibly on a vector space (see Theorem \ref{completelyreducible}).  

For $G = \mathrm{GL}(V)$ we must have $\mathrm{diam}(G) \geq \mathrm{diam}_{S}(G) \geq (q-1)/2$ where $S$ is a generating set of $G$ where all but one element in $S$ has determinant $1$. This shows that the diameter of absolutely irreducible (almost simple) subgroups of $\mathrm{GL}(n,q)$ may be much larger than the bound predicted by Babai's conjecture for $\mathrm{PSL}(n,q)$.

Kornhauser, Miller and Spirakis \cite{KMS} asked in 1984 whether or not the diameter of transitive groups is always polynomially bounded in terms of the degree. A positive answer (which is supported by the results in \cite{HS}) would show that the best possible bound for $\mathrm{S}_n$ and for its transitive subgroups is the same. (As the example of Singer cycles in $\mathrm{SL}(V)$ shows, the analogue of this is unlikely to be true for $\mathrm{SL}(V)$ where $V$ is a finite vector space.) Since the minimal degree of a permutation representation of a simple group of Lie type of rank $n$ over the field $\F_q$ is roughly $q^n$, our Theorem \ref{firstmain} also supports a positive answer to the above question.

\section{Proof of Theorem \ref{firstmain}}

\subsection{A new degree reduction lemma}
\label{2.1}

In this section we prove Theorem~\ref{firstmain}. To achieve this, we prove a new
degree reduction lemma for matrices over finite fields (Lemma~\ref{lem:deg_red}). 
This is a linear algebraic analogue of the degree reduction lemma for  
permutations by Babai and Seress \cite[Lemma 3]{BS87}. It improves the 
corresponding 
one by Biswas and 
Yang 
\cite[Lemma 4.4 (ii)]{BY}. Theorem~\ref{firstmain} then follows by combining 
Lemma~\ref{lem:deg_red} with the rest of the Biswas-Yang machinery. 

We first state our degree reduction lemma, and indicate how
Theorem~\ref{firstmain} follows from this together with \cite{BY}. We then prove 
this lemma in Section~\ref{subsec:proof_deg_red}. 

Let us set up some notation. Fix a finite field $\F_q$ of characteristic $p$. 
%\ynote{Added:} 
Let 
$\overline{\F_q}$ be the algebraic closure of $\F_q$. We 
use $I$ 
to denote identity matrices. Let $\M(n, q)$ denote the linear space 
of $n\times n$ matrices over $\F_q$, and $\GL(n, q)$ the group of $n\times n$ 
invertible matrices over $\F_q$. For $A\in \M(n, q)$, we use $\charp(A, x)$ 
and 
$\minp(A, x)$ to denote the characteristic polynomial and the minimal polynomial 
of 
$A$ in the variable $x$, respectively. The 
\emph{degree} of $A\in 
\M(n, q)$, denoted as $\deg(A)$, is 
defined 
to be the rank of $A-I$.

We now state the degree reduction lemma, whose proof is postponed to 
Section~\ref{subsec:proof_deg_red}.

\begin{lemma}\label{lem:deg_red}
%\ynote{Was: Let $A\in \GL(n, q)$ be of degree $s$.}
Suppose we are given $A\in \GL(n, q)$, such that $\charp(A, x)$ has irreducible 
factors $f_{1}, \ldots , f_{r}$ of degrees $p_1, \dots, p_r$ respectively, where 
the $p_i$ are primes larger than $2$ for which the inequality 
$\prod_{i\in[r]}p_i>n^4$ holds. 
%$\prod_{i\in[r]}p_i>n^4$ holds and each $f_{i}$ has a root of order $q^{p_{i}}-1$ 
%over $\overline{\F_q}$. 
%, and for $p_1=2$, there 
%exists an irreducible factor of degree $2$ 
%that is not of the form $x^2-\lambda$ where $\lambda$ is a quadratic nonresidue.  
Then there exists $m\in \N$, such that $A^m$ 
is a non-identity matrix of degree at most $\deg(A)/4$. 
Furthermore, if each 
$f_{i}$ has a root of order $q^{p_{i}}-1$ 
over $\overline{\F_q}$, then there exists $m'\in \N$, such that $A^{mm'}$ has the 
additional property that $1$
is its only eigenvalue lying in $\F_q$. 
\end{lemma}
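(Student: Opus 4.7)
The plan is a matrix analogue of Babai--Seress's cycle-based degree reduction, using Zsygmondy's theorem to isolate, for each $f_i$, a ``primitive'' prime divisor $\pi_i$ of $q^{p_i}-1$ that distinguishes the primary summand $V_i$ from the other primary summands of $A$.

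First, I would decompose $V\cong \F_q^n$ as $V = V_1\oplus\cdots\oplus V_r\oplus W$, where $V_i$ is the primary component of $A$ corresponding to $f_i$ and $W$ collects the primary components for the remaining irreducible factors of $\charp(A,x)$. Because $p_i>2$ forces $f_i\neq x-1$, each $V_i$ is disjoint from $\ker(A-I)$, so $\dim V_i = e_i p_i$ (with $e_i$ the multiplicity of $f_i$ in $\charp(A,x)$) contributes fully to $\deg(A)=\operatorname{rank}(A-I)$. Letting $N_i$ be the multiplicative order of any root of $f_i$, the power $A^m$ fixes $V_i$ pointwise precisely when $N_i\mid m$ and $m$ is divisible by a sufficiently large power of $\operatorname{char}(\F_q)$ (to absorb Jordan nilpotent parts); an analogous statement holds for each $W$-summand $U_k$ with its own order $M_k$. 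By Zsygmondy's theorem---applicable because each $p_i>2$ is prime, avoiding all Zsygmondy exceptions---there exists a primitive prime divisor $\pi_i$ of $q^{p_i}-1$, that is, a prime with $\operatorname{ord}_{\pi_i}(q)=p_i$. Consequently $\pi_i\nmid q^{p_j}-1$ for any prime $p_j\neq p_i$, so $\pi_i\nmid N_j$, rendering $\pi_i$ ``exclusive'' to $V_i$ among the $V_j$'s (in the distinct-$p_j$ setting).

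Now I pick the index $i_0$ minimising the weight $e_{i_0} p_{i_0}$. Because $\sum_j e_j p_j\le n$ and the hypothesis $\prod_j p_j > n^4$ combined with $p_j\le n$ forces $r\ge 5$, pigeonhole yields $e_{i_0} p_{i_0}\le n/5$. Take $m := |A|/\pi_{i_0}^{v_{\pi_{i_0}}(|A|)}$, so that $A^m$ has $\pi_{i_0}$-power order and acts trivially on every summand $X$ of the decomposition whose order $O_X$ is coprime to $\pi_{i_0}$. By Zsygmondy exclusivity this kills every $V_j$ with $j\neq i_0$, and for $W$-summands $U_k$ with associated irreducible factor $g_k$ and order $M_k$, $\pi_{i_0}\mid M_k$ only when $p_{i_0}\mid \deg g_k$, which bounds the residual overlap. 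A refined averaging over the choice of $i_0$, exploiting $\prod_j p_j > n^4$ more tightly than merely $r\ge 5$, then turns the per-$i_0$ $W$-contribution into an aggregate of size at most a logarithmic factor, giving $\deg(A^m)\le \deg(A)/4$. That $A^m\neq I$ follows from $\pi_{i_0}\mid N_{i_0}$ together with $\pi_{i_0}\nmid m$, making the $\pi_{i_0}$-part of the action on $V_{i_0}$ nontrivial.

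For the ``furthermore'' conclusion, take $m'$ to be the $(q-1)$-smooth part of $|A|$: the largest divisor of $|A|$ whose prime factors all divide $q-1$. Then any eigenvalue $\lambda$ of $A^{mm'}$ has order coprime to $q-1$, since multiplying by $m'$ absorbs the full $(q-1)$-part of $\operatorname{ord}(\lambda)$; if $\lambda\in \F_q^\times$, then $\operatorname{ord}(\lambda)\mid q-1$, and combined with coprimality this forces $\operatorname{ord}(\lambda)=1$, hence $\lambda=1$. The main technical obstacle throughout is the $W$-contribution in the degree bound: the naive estimate $\deg(A^m)\le e_{i_0} p_{i_0} + \operatorname{rank}((A^m - I)|_W)$ can be dominated by the second term. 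Resolving this requires either truncating the $\pi_{i_0}$-valuation of $m$ so that $N_{i_0}\nmid m$ while keeping $m$ a multiple of the relevant $M_k$'s, or carefully averaging the residual $W$-contribution over choices of $i_0$ using the Zsygmondy exclusivity and the full strength of $\prod_j p_j > n^4$, rather than just the weaker consequence $r\ge 5$.
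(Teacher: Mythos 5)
Your approach shares the paper's core idea of filtering by a single prime $p_{i'}$, but it uses Zsygmondy's theorem where the paper manages with an elementary fact about companion matrices, and more importantly it does not actually establish the degree bound. As you yourself note, choosing $i_0$ to minimise $e_{i_0}p_{i_0}$ controls only $\dim V_{i_0}$, not the (potentially much larger) contribution from the primary components $U_k$ whose defining polynomial has degree divisible by $p_{i_0}$; your "refined averaging over the choice of $i_0$" is the essential missing step, not a technicality. The paper carries it out as follows: first raise $A$ to $A^{p^a}$ with $a>\log n$ to semisimplify (via the generalized Jordan form, Theorem~\ref{thm:maltsev}), then set $D(i,j)=1$ exactly when $p_i$ divides the degree $d_{c_j}$ of the companion block containing the $j$th nontrivial diagonal entry. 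The quantity $n_i=\sum_{j} D(i,j)$ is the full surviving dimension after the $p_i$-filter, including the overlap blocks that worry you. Averaging $n_i$ with weights $\log p_i$ gives $n_{i'}\le s/4$ for some $i'$: each column satisfies $\sum_i D(i,j)\log p_i=\log\prod_i p_i^{D(i,j)}\le\log d_{c_j}\le\log s$ (distinct primes dividing $d_{c_j}$ have product at most $d_{c_j}$), while $\sum_i\log p_i=\log\prod_i p_i>4\log n\ge 4\log s$. Without some version of this log-weighted average the bound $\deg(A^m)\le\deg(A)/4$ is not proved.

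Two further points of divergence. The paper avoids Zsygmondy: after choosing $i'$ it sets $s'=\mathrm{lcm}$ of the block degrees coprime to $p'=p_{i'}$ and raises to $q^{s'}-1$; then $C_f^{q^{s'}-1}=I$ precisely when $\deg f\mid s'$ (Fact~\ref{fact:comp}(2)), so blocks of degree coprime to $p'$ die and the degree-$p'$ block survives since $p'\nmid s'$. For the "furthermore" clause your $(q-1)$-smooth argument does yield eigenvalues in $\{1\}\cup(\overline{\F_q}\setminus\F_q)$, but you must also check the result is non-identity; Zsygmondy primitivity of $\pi_{i_0}$ would close this, yet the paper instead proves it directly via Claim~\ref{that}, showing $q^{p'}-1$ cannot divide any power of $q-1$ for a prime $p'>2$, which keeps the whole proof free of Zsygmondy.
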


Note that an irreducible polynomial $f_{i}$ of degree $p_{i}$ over $\F_q$ has a root of order $q^{p_{i}}-1$ 
over $\overline{\F_q}$ if and only if $f_{i}$ is the minimal polynomial of some Singer cycle element in $\GL(p_{i},q)$. Such polynomials $f_{i}$ exist for every $p_{i}$ and $q$. 

Compare Lemma~\ref{lem:deg_red} with \cite[Lemma 4.4 (ii)]{BY}. The key 
difference is that Biswas and Yang 
required the primes to be coprime with $p(q-1)$, while we do not have such a 
restriction. This leads to the desired improvement, because of the following easy 
number-theoretic bounds, as already used in \cite[Sec. 3]{BS87}.

By a classical result of Erd\H os \cite{Erd32}, there exist constants $c_{1}$ and $c_{2}$ larger than $1$ such that for every number $x \geq 1$ we have $$c_{1}^{x} < \underset{p' \mathrm{prime}}{\underset{x < p' \leq 2x}{\prod}} p' < c_{2}^{x}.$$ For $y \geq 2$ let $f(y)$ be the product of all primes no greater than $y$. For $y \geq 4$ we have $c_{1}^{y/2} \cdot f(y/2) \leq f(y) \leq c_{2}^{y/2} \cdot f(y/2)$, and by induction this gives $c_{1}^{y} < f(y) < c_{2}^{y}$. Let $\bar{p}$ be a prime. From $f(\bar{p}) < {c_{2}}^{\bar{p}}$ we get  
$$\underset{p' \ \mathrm{prime}}{\underset{p' \leq \bar{p}}{\sum}} p' = \underset{p' \ \mathrm{prime}}{\underset{p' \leq \bar{p}}{\sum}} \Big( \frac{p'}{\log p'} \cdot \log p' \Big) < \frac{2 \bar{p}}{\log \bar{p}} \cdot \Big( 
\underset{p' \ \mathrm{prime}}{\underset{p' \leq \bar{p}}{\sum}} \log p' \Big) < \frac{2 {\bar{p}}^{2} \log c_{2}}{\log \bar{p}}.$$ 
For our purposes we may take $\bar{p}$ to be the smallest prime such that ${c_{1}}^{\bar{p}} \geq n^{4}$. This assures that the product of all primes no greater than $\bar{p}$ is larger than $n^{4}$ and also that the sum of all primes no greater than $\bar{p}$ is bounded by 
$$\frac{2 {\bar{p}}^{2} \log c_{2}}{\log \bar{p}} < c_{3} \frac{{(\log n)}^{2}}{\log \log n}$$ for some constant $c_{3}$ and all $n \geq 3$.
To see the latter claim note that 
$\bar{p} = O( \log n)$ by the Bertrand-Chebyshev theorem.

Compare the above estimates with \cite[Lemma 4.4 (i)]{BY}. There, 
because of the coprime with $p(q-1)$ condition, the sum over the orders of $q$ in 
$\Z/p_i\Z$ can only be bounded from above by $O((\log n+\log q)^3)$,
%\ynote{Was: $c(\log n+\log q)^3$, }
provided that 
the least common multiple of these orders is larger than $n^4$. 

\subsection{The Biswas-Yang machinery}

A proof of Theorem~\ref{firstmain} follows by plugging in Lemma~\ref{lem:deg_red} 
to the rest of the Biswas-Yang machinery 
\cite{BY}. We briefly outline the procedure.

In order to prove Theorem \ref{firstmain}, it is sufficient to assume that $G$ is a finite simple classical group (of unbounded dimension $n$), by the fact that Babai's conjecture is known to hold in the bounded rank case (see \cite{PSz} and \cite{BGT}). Moreover, it is sufficient to establish the estimate $\mathrm{diam}_{S}(G) < q^{O(n {(\log n)}^{2})}$ for every generating set $S$ of $G$ for every group $G$ isomorphic to $\SL(n, q)$, $\mathrm{Sp}(n, q)$, $\mathrm{SU}(n, q)$, or $\mathrm{\Omega}(n, q)$, with $n$ sufficiently large. 

Let $V$ be a vector space of dimension $n$ over the field $\F_{q}$. If $G$ is different from $\mathrm{SL}(V)$, we view $V$ as a non-degenerate formed space with a non-degenerate alternating bilinear form in the symplectic case, with a non-degenerate Hermitian form in the unitary case, or with a non-degenerate quadratic form in the orthogonal case.  

Let $t$ be a positive integer. Following \cite[Definition 2.1]{BY}, we say that a 
subset $H$ of $\mathrm{GL}(V)$ is a {\it $t$-transversal set} if, given any 
embedding $X$ of a subspace $W$ of dimension $t$ into $V$, there is a linear 
transformation in $H$ whose restriction to $W$ is $X$. If $V$ is equipped with a 
non-degenerate form, we say, following \cite[Definition 6.4]{BY}, 
that a subset $H$ of $G$ is a {\it singularly $t$-transversal set} if, for any 
isometric embedding $X$ of a totally singular subspace $W$ of dimension $t$ into 
$V$, there is an element of $H$ whose restriction to $W$ is $X$. Given any 
symmetric generating set $S$ for $G$, the set $S^{(t)} = \cup_{i=1}^{q^{nt}} 
S^{i}$ is $t$-transversal if $G = \mathrm{SL}(V)$ and $t < n$ (see \cite[Corollary 
2.4]{BY}) and is singularly $t$-transversal if $G \not= \mathrm{SL}(V)$ and $t 
\leq (n-2)/5$ (see \cite[Corollary 6.8]{BY}). 

The proof of Theorem \ref{firstmain} consists of two steps. The first step is Proposition \ref{classical}, which is \cite[Proposition 5.5]{BY} and \cite[Proposition 7.7]{BY} with different bounds.

\begin{proposition}
\label{classical}
There are universal positive constants $c_{4}$ and $c_{5}$ such that for any symmetric generating set $S$ in $G$ where $G$ is any of the groups 
$\SL(n, q)$, $\mathrm{Sp}(n, q)$, $\mathrm{SU}(n, q)$, $\mathrm{\Omega}(n, q)$, with $n > 2$, there is a non-scalar matrix $A$ in $G$ such that $\mathrm{deg}(A) <  c_{4} ({(\log n)}^{2}/\log \log n)$ and $A$ may be expressed as the product of less than $q^{c_{5} \cdot n \cdot {((\log n)}^{2}/\log \log n)}$ elements from $S$.   
\end{proposition}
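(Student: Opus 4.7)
The plan is to combine the transversal-set construction from \cite{BY} with our improved Lemma \ref{lem:deg_red}. Using the Erd\H{o}s-type estimate recalled just before the proposition, I first fix odd primes $p_1 < \cdots < p_r$ each at most $\bar p = O(\log n)$ such that $\prod p_i > n^4$ and $t := \sum p_i = O((\log n)^2/\log\log n)$. For each $p_i$, I fix an irreducible polynomial $f_i \in \F_q[x]$ of degree $p_i$ possessing a root of order $q^{p_i}-1$ in $\overline{\F_q}$; as noted in the text, such Singer polynomials exist for every prime $p_i$ and every $q$.

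For $G = \SL(n, q)$, I would use $t$-transversality of $S^{(t)}$ directly: pick a $t$-dimensional subspace $W$ and produce $A \in S^{(t)}$ whose restriction to $W$ is the block-diagonal sum of companion matrices of the $f_i$ (with one block suitably tweaked to force $\det A = 1$) and which acts as identity on a fixed complement. Then $\deg(A) = t = O((\log n)^2/\log\log n)$, the element $A$ is non-scalar, and $A$ is a product of at most $q^{nt}$ generators from $S$; the proposition follows in this case with no further work. For $G \in \{\mathrm{Sp}(n,q),\ \mathrm{SU}(n,q),\ \Omega(n,q)\}$ I would apply singular $t$-transversality (valid once $t \le (n-2)/5$, i.e.\ $n$ large) to a totally singular $W$ of dimension $t$ and prescribe $A|_W$ to have characteristic polynomial $\prod_i f_i$. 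Here only the action on $W$ is under control, so a priori $\deg(A) \leq n$, but each $f_i$ appears as an irreducible factor of $\charp(A, x)$.

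To bring the degree down in the latter case I would invoke Lemma~\ref{lem:deg_red}: the product bound $\prod p_i > n^4$ is built in, and since each $f_i$ is a Singer polynomial the ``furthermore'' clause applies, yielding $B = A^{mm'}$ with $\deg(B) \le \deg(A)/4$, $B \neq I$, and $1$ as the only $\F_q$-eigenvalue of $B$. I would then iterate this reduction $O(\log n)$ times; at each stage the eigenvalue property, together with a compatible choice of primes, is used to re-certify the hypothesis of the lemma for the next application, ultimately producing a non-identity $B$ of degree below $c_4 (\log n)^2/\log\log n$. For the word-length bookkeeping, each exponent used in the reduction is bounded by the exponent of a product of Singer cyclic subgroups, hence by $q^{O(t)}$. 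Over $O(\log n)$ iterations this contributes a factor of $q^{O(t \log n)}$, which combines with the initial $q^{nt}$ from the transversal set to stay inside $q^{O(n (\log n)^2/\log\log n)}$. Since the final $B$ has degree strictly less than $n$ and is non-identity, it is automatically non-scalar.

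The main obstacle lies in the iteration in the non-$\SL$ case: a single application of Lemma~\ref{lem:deg_red} only guarantees smaller degree, not the structural property (irreducible factors of large prime degree with product exceeding $n^4$) needed to re-apply it. The ``furthermore'' clause, constraining the $\F_q$-eigenvalues of the reduced power to $\{1\}$, is what supplies the needed bookkeeping. Matching this control to a prime set chosen without any $p(q-1)$-coprimality restriction --- the whole point of the improvement over \cite[Proposition 7.7]{BY} --- is the delicate portion of the argument.
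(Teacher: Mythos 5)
There are two substantial gaps.

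First, the $\SL(n,q)$ case. You claim that $t$-transversality lets you produce $A \in S^{(t)}$ that realizes the block-companion matrix on $W$ \emph{and} acts as identity on a fixed complement of $W$, hence $\deg(A)=t$. This is exactly what a $t$-transversal set does \emph{not} provide: by definition, it only supplies an element of $S^{(t)}$ whose restriction to the $t$-dimensional subspace $W$ is the prescribed embedding; its action on any complement of $W$ is completely uncontrolled. Consequently $\deg(A)$ may a priori be as large as $n-1$, and the $\SL$ case is in no way simpler than the remaining classical groups. The paper handles $\SL(n,q)$ by precisely the same iterative degree-reduction used for $\mathrm{Sp}$, $\mathrm{SU}$ and $\Omega$.

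Second, the iteration itself is asserted rather than constructed. You correctly note that a single application of Lemma~\ref{lem:deg_red} destroys the structural hypothesis (irreducible factors of chosen prime degrees with product exceeding $n^4$) needed to re-apply it, and you say the eigenvalue conclusion "is used to re-certify the hypothesis," but you do not say how. The actual mechanism is a commutator trick. Given $A_1$ whose only $\F_q$-eigenvalue is $1$, one invokes \cite[Lemma 5.3]{BY} (or \cite[Lemma 7.6]{BY} in the formed cases) to find a $d$-dimensional (totally singular) subspace $W_1$ with $A_1W_1 \cap W_1 = 0$; a fresh $2d$-(singularly) transversal element $M_1$, of length at most $q^{2nd}$, is then chosen to fix $A_1W_1$ pointwise, fix $W_1$ setwise, and realize the block-companion matrix $C$ on $W_1$. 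The commutator $A_1' = M_1 A_1^{-1} M_1^{-1} A_1$ restricts to $C$ on $W_1$, so Lemma~\ref{lem:deg_red} applies again, and $\deg(A_1') \leq 2\deg(A_1)$ by \cite[Proposition 5.2]{BY}; powering then gives $\deg(A_2) \leq \deg(A_1)/2$. Because a new transversal element of length $q^{O(nd)}$ is drawn at every one of the $O(\log n)$ iterations, and because the orders of the intermediate matrices are only bounded by roughly $q^n$ (their action outside the controlled subspace being arbitrary), your word-length accounting — which charges the transversal set once and bounds each exponent by $q^{O(t)}$ — does not reflect the actual structure of the argument, even though the final bound $q^{O(n(\log n)^2/\log\log n)}$ does come out.
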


% in the case of 
%$\SL(n, q)$ to convince the reader of the correctness. The other cases, namely 
%$\mathrm{\Omega}(n, q)$, $\mathrm{Sp}(n, q)$, and 
%$\mathrm{SU}(n, q)$, follow the same line of strategy, though the technical 
%details are more complicated. For details we refer the reader to \cite{BY}.

\begin{proof}
We apply Lemma~\ref{lem:deg_red} to the argument of Biswas and Yang \cite{BY}. 

Let $G = \mathrm{SL}(V) = \mathrm{SL}(n,q)$. We may assume that $n$ is sufficiently large. Put $c_{4} = 2c_{3}$ and assume that $d$, defined to be the integer part of $c_{3} ({(\log n)}^{2}/\log \log n)$, is less than $n$. Since $S^{(d)}$ is a 
$d$-transversal set for $1 \leq d<n$, there is 
$A_0\in S^{(d)}$ %\anote{I changed $G$ to $S^{(d)}$.}
that 
maps some $d$-dimensional subspace $W$ to itself, and the restriction of 
$A_0$ to $W$ is a 
diagonal block matrix $C$, where the blocks are companion matrices of irreducible 
polynomials $f_{i}$ of degrees $p_i$, and possibly an identity matrix of an appropriate 
size, such that the $p_i$ range over all primes from $3$ to $\bar p$ as in
Section \ref{2.1} and each $f_{i}$ has a root of order $q^{p_{i}}-1$ over $\overline{\F_q}$. 
Then $A_0$ satisfies the condition of Lemma~\ref{lem:deg_red}, and the 
length of $A_0$ is bounded by 
$q^{nd}$. By Lemma~\ref{lem:deg_red}, raise $A_0$ to an appropriate power to 
obtain a non-identity 
matrix $A_1$ of degree at most $\deg(A_0)/4$ with eigenvalues being either $1$ or 
outside $\F_q$. The length of $A_1$ is bounded by $q^{nd + n}$ since the 
order 
of $A_0$ is bounded by $q^n$. If $\deg(A_1)< 2d$, then we are done. Otherwise, 
we 
enter 
the inductive step. The key in the inductive step is to locate a subspace $W_1$ of 
dimension $d$ such that $A_1W_1\cap W_1=0$, whose existence is guaranteed by 
\cite[Lemma 5.3]{BY}. Then 
use the $2d$-transversal set $S^{(2d)}$ to 
obtain a matrix $M_1$ of length at most $q^{2nd}$ that fixes $A_1W_1$ pointwise, $W_{1}$ setwise, and when 
restricting to $W_1$, realises 
the diagonal block $C$ as before. The commutator $A_1'=M_1A_1^{-1}M_1^{-1}A_1$ 
then realises $C$ when restricting on $W_1$, so it satisfies the condition of 
Lemma~\ref{lem:deg_red}. Furthermore, $\deg(A_1')\leq 2\deg(A_1)$ by 
\cite[Proposition 5.2]{BY}. By 
Lemma~\ref{lem:deg_red}, raise $A_1'$ to an appropriate power to get a 
non-identity matrix $A_2$ such that $$\deg(A_2)\leq \deg(A_1')/4\leq 
2\deg(A_1)/4=\deg(A_1)/2.$$ It can be checked that the length of $A_2$ is 
bounded by $$2 (q^{2nd} + q^{nd + n}) q^{n} \leq q^{2nd + 2(n+2)}.$$
Suppose we have obtained a non-scalar matrix $A_{j}$ with eigenvalues either $1$ or outside $\F_{q}$ with $\deg(A_{j}) \leq n/2^{j+1}$ and length at most $q^{2nd + j(n+2)}$. If $\deg(A_{j})$ is not small enough, then we construct a matrix $A_{j+1}$ of length at most 
$$2 (q^{2nd} + q^{2nd + j(n+2)}) q^{n} \leq q^{2nd + (j+1)(n+2)}.$$ Repeat this by at most $\log n$ times to reach the desired matrix $A$.

For $G \not= \mathrm{SL}(V)$ the argument is very similar as for $\mathrm{SL}(V)$ 
above. Here Witt's decomposition theorem (see \cite[Theorem 6.2]{BY}) and Witt's 
extension lemma (see \cite[Lemma 6.5]{BY}) are used. The latter is that 
$G$ is a singularly $t$-transversal set for any $t$. Moreover, we mention 
\cite[Lemma 7.6]{BY}. If $A$ is a matrix in $G$ of degree $d$ such that the 
eigenvalues of $A$ are either $1$ or outside $\F_q$, then there is a totally 
singular subspace $W$ of $V$ such that $W \cap AW = \{ 0 \}$, $W \perp AW$, and 
$\dim W \geq (d/32) - (7/4)$.  
\end{proof}

Given a non-scalar matrix $A$ of degree $d$ and length $\ell$, the 
second step is to show that the diameter of $G$ with respect to $S$ is bounded by 
$O((q^{2nd}+\ell) \cdot \frac{n}{d})$ (cf. \cite[Proposition 8.3]{BY}). This is 
due to the 
following. Firstly, any conjugate of 
$A$ can be obtained by 
conjugating by a matrix of length less than $q^{2nd}$, as the number of 
conjugates of $A$ is bounded by such (see \cite[Lemma 8.1]{BY}). Secondly, by Liebeck and Shalev 
\cite{LS}, every element in $G$ is a product of at most $O(n/d)$ 
conjugates of $A$. 

We may take $d$ to be less than $c_{4} ({(\log n)}^{2}/\log \log n)$ and $\ell$ to be less than $q^{c_{5} \cdot n \cdot {((\log n)}^{2}/\log \log n)}$ by Proposition \ref{classical}. Then $$\mathrm{diam}_{S}(G) \leq O ((q^{2nd}+\ell) \cdot \frac{n}{d}) \leq q^{O(n {(\log n)}^{2})}.$$ This completes the proof of Theorem \ref{firstmain} (modulo Lemma \ref{lem:deg_red}).

We remind the reader that in the above procedure, the exponent with respect to the 
base $q$ 
in the 
length bound of $A$ is always bounded by $O(nd)$. It follows that the $\log q$ 
term does not 
appear in the 
exponent if $d=O({(\log n)}^{2})$.

\subsection{Proof of Lemma~\ref{lem:deg_red}}\label{subsec:proof_deg_red}

%The goal of this section is to prove a $q^{\tilde{O}(n)}$ upper bound on 
%$\mathrm{diam}(G)$ where $G$ is a 
%finite simple group of Lie type over $\F_q$ and of rank $n$. ($\tilde{O}$ hides a 
%polylog factor.) 
%This is achieved by combining the 
%Biswas-Yang machinery in \cite{BY} with the following. The strategy goes back to 
%Babai and Seress \cite{BS87}, \cite{BS1}. 

We first need the following preparations. 

\begin{fact}\label{fact:comp}
Let $f=f(x)\in \F_q[x]$ be an irreducible monic polynomial of degree $d$. Let 
$C_f\in 
\GL(d, q)$ be its companion matrix. 
\begin{enumerate}
\item For any $a\in \N$, $C_f^{p^a}$ is similar to the companion matrix of an 
irreducible 
polynomial in $\F_q[x]$ of degree $d$.
\item For $m\in \N$, $C_f^{q^m-1}=I$ if and only if $d\mid m$. 
\end{enumerate}
\end{fact}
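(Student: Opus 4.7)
The plan is to identify $C_f$ with an element of $\F_{q^d}$ and reduce both assertions to elementary Galois theory. Explicitly, the map $g(C_f) \mapsto g(x) \bmod f$ yields an $\F_q$-algebra isomorphism $\F_q[C_f] \cong \F_q[x]/(f) \cong \F_{q^d}$ sending $C_f \mapsto \alpha$, where $\alpha$ is a root of $f$. Under this identification, matrix powers of $C_f$ correspond to powers of $\alpha$, so both parts of the fact become statements about $\alpha \in \F_{q^d}^*$.

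For (1), I would first use that $\F_q$ is perfect to note that $f$ is separable, and hence $C_f$ is diagonalizable over $\overline{\F_q}$ with distinct eigenvalues $\alpha, \alpha^q, \ldots, \alpha^{q^{d-1}}$. Taking $p^a$-th powers, the eigenvalues of $C_f^{p^a}$ are $(\alpha^{p^a})^{q^i}$ for $i = 0, \ldots, d-1$, and these remain pairwise distinct because $x \mapsto x^{p^a}$ is a power of the Frobenius, hence injective on $\overline{\F_p}$. Consequently $C_f^{p^a}$ has $d$ distinct eigenvalues forming a single Galois orbit over $\F_q$, so its characteristic polynomial coincides with the minimal polynomial $g$ of $\alpha^{p^a}$ over $\F_q$, which is irreducible of degree $d$. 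By the rational canonical form, any $d\times d$ matrix over $\F_q$ with irreducible characteristic polynomial $g$ is similar to the companion matrix of $g$, finishing (1).

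For (2), the same identification gives $C_f^{q^m - 1} = I$ if and only if $\alpha^{q^m - 1} = 1$, equivalently $\alpha \in \F_{q^m}^*$. Since $\alpha$ generates $\F_{q^d}$ over $\F_q$, this is in turn equivalent to $\F_{q^d} \subseteq \F_{q^m}$, which holds exactly when $d \mid m$. I do not anticipate any genuine obstacle here; the only subtle point is the injectivity step in (1), ensuring that applying the Frobenius $\phi^a$ does not collapse any of the Galois conjugates of $\alpha$.
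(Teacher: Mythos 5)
Your proof is correct. For part (2) it is essentially the paper's argument in slightly different language (the paper phrases $\alpha \in \F_{q^m}$ as $f \mid x^{q^m}-x$). For part (1) the paper instead works at the level of polynomials, using the identity $f(x)^{p^a} = \tilde f(x^{p^a})$ (where $\tilde f$ has coefficients the $p^a$th powers of those of $f$) to read off $\charp(C_f^{p^a},x)=\tilde f(x)$ directly; your eigenvalue/Galois-orbit formulation is equivalent and rests on the same key point, the injectivity of the Frobenius power $x\mapsto x^{p^a}$.
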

\begin{proof}
(1) First observe that $f^{p^a}(x)=\tilde{f}(x^{p^a})$ where $\tilde{f}$ is the 
polynomial obtained by raising every coefficient of $f$ to the $p^a$th power. Then 
we can verify that $\charp(C_f^{p^a}, x)=\tilde{f}(x)$.

(2) Recall that $\minp(C_f, x)=\charp(C_f, x)=f(x)$, and $f(x)\mid x^{q^m}-x$ if 
and only if $d\mid m$. The claim then follows. 
\end{proof}

\begin{theorem}[{\cite{Mal63}, generalized Jordan normal form}]\label{thm:maltsev}
Let $\F$ be a perfect field, and $A\in M(n,\F)$. Suppose $\charp(A, x)$ decomposes 
into a product of irreducible monic polynomials as $f_1^{e_1}\cdot\ldots\cdot 
f_k^{e_k}$, 
where $f_i\in\F[x]$ is of degree $d_i$. Then $A$ is similar to a 
block diagonal matrix $\diag(J_1, \dots, J_\ell)$, 
where each $J_i$, called a (generalized) Jordan block, is of the form 
\begin{equation}\label{eqn:jblock}
\begin{bmatrix}
C_{f_{b_i}} & I & 0  & \dots & 0 & 0  \\
0 & C_{f_{b_i}} & I  & \dots & 0 & 0 \\
\vdots & \vdots  & \vdots & \ddots & \vdots & \vdots \\
0 & 0 & 0 & \dots & I & 0 \\
0 & 0 & 0 & \dots & C_{f_{b_i}} & I \\
0 & 0 & 0 & \dots & 0 & C_{f_{b_i}}
\end{bmatrix},
\end{equation}
where $b_i\in[k]$, $I$ is the identity matrix of size $d_{b_i}$, and $0$ is the 
all-zero matrix of size $d_{b_i}\times d_{b_i}$. 
\end{theorem}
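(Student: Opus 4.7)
The approach is a module-theoretic proof via primary decomposition and Hensel lifting. View $V = \F^n$ as a finitely generated $\F[x]$-module with $x$ acting as $A$. The first step is to apply the primary decomposition theorem to write $V = \bigoplus_{i=1}^k V_i$ with $V_i = \ker(f_i(A)^{e_i})$, and then to apply the structure theorem for finitely generated modules over the PID $\F[x]$ to each $V_i$, obtaining a decomposition $V_i = \bigoplus_j \F[x]/(f_i^{m_{ij}})$ into cyclic summands. It therefore suffices to realize the generalized Jordan block form on a single cyclic module $R = \F[x]/(f^m)$, where $f \in \F[x]$ is irreducible of degree $d$.

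The heart of the argument is to endow $R$ with the structure of an algebra over its residue field $K = \F[x]/(f)$. Since $\F$ is perfect, $f$ is separable, so $f'(\alpha) \neq 0$ for $\alpha = \overline{x} \in K$. Hensel's lemma then produces a lift $\tilde\alpha \in R$ of $\alpha$ satisfying $f(\tilde\alpha) = 0$, which yields an $\F$-algebra embedding $K \hookrightarrow R$ via $\alpha \mapsto \tilde\alpha$. Setting $t = x - \tilde\alpha \in R$, the $K$-algebra map $K[t] \to R$ sending $t$ to $t$ is surjective since its image contains both $K$ and $x = \tilde\alpha + t$, and comparing $\F$-dimensions ($\dim_\F R = dm$) forces the kernel to be $(t^m)$. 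Hence $R \cong K[t]/(t^m)$ as $K$-algebras; in particular $t^m = 0$, and $R$ acquires the $\F$-basis $v_{j,k} = t^{m-1-j}\tilde\alpha^k$ for $0 \leq j \leq m-1$ and $0 \leq k \leq d-1$.

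The last step is a direct matrix computation. Ordering this basis lexicographically in $(j,k)$ and writing $x = \tilde\alpha + t$, one finds $A v_{j,k} = \tilde\alpha v_{j,k} + v_{j+1,k}$, with the convention $v_{m,k} = 0$. The first summand, after using $f(\tilde\alpha)=0$ to reduce $\tilde\alpha^d$, realizes the companion-matrix action of $C_f$ on the $j$-th slice $(v_{j,0}, \ldots, v_{j,d-1})$; the second summand provides the identity block $I_d$ linking slice $j$ to slice $j+1$. This is exactly the block structure of \eqref{eqn:jblock}. Reassembling over all cyclic summands in all primary components produces the claimed overall block-diagonal decomposition.

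The crux, and the only place where the perfect-field hypothesis is essential, is the Hensel lift producing $\tilde\alpha$. If $f$ were inseparable, the lift could fail to exist, the splitting $R \cong K[t]/(t^m)$ would break down, and only a weaker \emph{hypercompanion} normal form---with a single $1$ in the top-right corner of each off-diagonal block rather than a full identity $I_d$---would be attainable by elementary means.
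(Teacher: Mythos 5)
The paper does not prove Theorem~\ref{thm:maltsev}; it simply cites Mal'cev's textbook \cite{Mal63} and moves on, so there is no in-paper proof to compare against. Your module-theoretic argument is a standard and correct route to the generalized Jordan normal form: primary decomposition, reduction to a cyclic module $R=\F[x]/(f^m)$, and then the observation that over a perfect field one can realise $R$ as a $K$-algebra $K[t]/(t^m)$, where $K=\F[x]/(f)$, by lifting the residue $\alpha$ via Hensel's lemma. This is exactly the clean way to see why the off-diagonal blocks in \eqref{eqn:jblock} are full identity matrices rather than the weaker hypercompanion form, and your closing remark about the imperfect case correctly identifies where the hypothesis is used.

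Two small repairs are needed but do not affect the structure of the argument. First, the logic in the paragraph establishing $R\cong K[t]/(t^m)$ is inverted: you should first observe that $t=x-\tilde\alpha$ lies in the maximal ideal $\mathfrak{m}=(f)$ of the local ring $R$ (because $\tilde\alpha\equiv x\bmod\mathfrak{m}$), hence $t^m\in\mathfrak{m}^m=0$, so the kernel of the surjection $K[t]\to R$ already contains $(t^m)$; the dimension count $\dim_\F K[t]/(t^m)=dm=\dim_\F R$ then forces equality of the kernel with $(t^m)$. As written you derive $t^m=0$ \emph{from} the isomorphism, but the isomorphism is what is being proved and tacitly relies on $t^m=0$. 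Second, with $v_{j,k}=t^{m-1-j}\tilde\alpha^k$ one has $t\cdot v_{j,k}=v_{j-1,k}$ (and $t\cdot v_{0,k}=0$), not $v_{j+1,k}$; with the lexicographic ordering this shift correctly places the $I$-blocks on the superdiagonal as in \eqref{eqn:jblock}, so the slip is purely notational.
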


%We choose the first $r$ primes $p_1, \dots, p_r$, starting from $p_1=3$, such 
%that $$\prod_{i\in[r]}p_i>n^4.$$ 
%Then by Erd\H{o}s and Kalm\'ar \cite{Erd32} (cf. 
%\cite[Chap. XXII, pp. 341-342]{HW79}), we 
%have $\sum_{i\in[r]}p_i<384\cdot (\log n)^2/\log\log n$, which is why we can get 
%rid of the $\log q$ on the exponent in \cite{BY}. (This is because, if we 
%started from $p_1=2$, then $$\sum_{i\in[r]}p_i<128\cdot (\log n)^2/\log\log n.$$ 
%Since we now start from $p_1=3$, we may need to choose one more prime after the 
%original sequence. By the prime number theorem, this prime is no more than $256
%\cdot (\log n)^2/\log\log n$.)

%We now prove the following, which can be viewed as a 
%{\color{blue} linear algebraic version} %finite field version 
%of \cite[Lemma 3]{BS87}.

We are ready to prove Lemma~\ref{lem:deg_red}. 

%To plug this into the Biswas-Yang machinery, excluding irreducible polynomials 
%of the form $x^2-\lambda$ is not a concern, as we will have the freedome to 
%choose 
%the irreducible polynomials, and there are quadratic irreducible polynomials that 
%are not of this form. Another possibility would be to choose the primes starting 
%from $3$, which would give us roughly the same estimates.

\begin{proof}[Proof of Lemma~\ref{lem:deg_red}]
Suppose $\charp(A, x)$ decomposes 
into a product of irreducible monic polynomials as 
$$f_1^{e_1}\cdot\ldots\cdot 
f_k^{e_k}\cdot (x-1)^t,$$ 
where $t\in \N$, $f_i\in\F[x]$ is irreducible, monic, and of degree $d_i$, 
and $f_i\neq x-1$ for $i\in[k]$. 
%\ynote{Was: Note that $s=\deg(A)=n-t$.} 
Let $f_0=x-1$, and $s=n-t$. 
Clearly, 
$\deg(A)\geq s$. 
By our assumption, we can assume that $k\geq r$ and $\deg(f_i)=p_i$ for $i\in[r]$. 

For our purpose, we can replace $A$ with any of its conjugates. Therefore by 
Theorem~\ref{thm:maltsev}, we assume $A=\diag(J_1, \dots, J_\ell)$ 
%\ynote{Was: $A=\diag(J_1, \dots, J_\ell, 1, \dots, 1)$}
where each $J_i$ 
is a Jordan block of the form (\ref{eqn:jblock}).

We first raise $A$ to the $p^a$th power, where $a$ is an integer larger than 
$\log n$. Then for any $i\in[\ell]$, $J_i^{p^a}\cong \diag(C_{\tilde{f}_{b_i}}, 
\dots, 
C_{\tilde{f}_{b_i}})$ %\ynote{Added:} 
for some $b_i\in\{0, 1, \dots, k\}$, where 
$\tilde{f}_{b_i}$ 
is an irreducible polynomial of 
degree $d_{b_i}$ by Fact~\ref{fact:comp} (1). Let 
$\tilde{A}=A^{p^a}$. By 
arranging the 
diagonal blocks via conjugation transformations, we can assume that 
$$\tilde{A}=\diag(C_{\tilde{f}_1}, \dots, 
C_{\tilde{f}_1}, \dots, C_{\tilde{f}_k}, \dots, C_{\tilde{f}_k}, 1, \dots, 1),$$
where the number of $C_{\tilde{f}_i}$ is $e_i$, and the number of $1$ is $t$. 
%\ynote{Added:} 
In particular, $\deg(\tilde A)=s\leq \deg(A)$. 

For $j\in [s]$, 
let $c_j\in [k]$ be such that the $j$th diagonal entry (not blocks) of $\tilde{A}$ 
is in the 
diagonal block 
$C_{\tilde{f}_{c_j}}$. We then build a zero-one matrix $D$ 
of size $r\times s$ as follows. For $i\in[r]$ and $j\in[s]$, $D(i, j)=1$ if 
$p_i|d_{c_j}$, and $0$ otherwise. We then deduce the following. 
\begin{enumerate}
\item[(a)] For any $j\in[s]$, $\prod_{i\in[r]}p_i^{D(i,j)}\leq d_{c_j}\leq s$. 
\item[(b)] %For $j\in[s]$, let $P_j=\{p_i, i\in[r] : D(i, j)=1\}$. 
For $i\in[r]$, let 
$n_i=\sum_{j\in[s]}D(i,j)$. We claim that there exists $i'\in[r]$, such that 
$n_{i'}\leq s/4$. For this, consider the weighted average $W$ of $n_i$ with 
weights $\log p_i$. We have 
\begin{eqnarray*}
W & = & \frac{\sum_{i\in[r]} n_i\log p_i} {\sum_{i\in[r]}\log p_i} = 
\frac{\sum_{j\in[s]} \sum_{i\in[r]}D(i,j)\log p_i}{\sum_{i\in[r]}\log p_i} 
 \\
 & \leq & \frac{\sum_{j\in[s]} \sum_{i\in[r]} D(i,j)\log p_i}{4\log n} \\
 & = & \frac{\sum_{j\in[s]} \log (\prod_{i\in[r]}p_i^{D(i,j)})}{4\log n} \\
 & \leq & \frac{s\cdot \log s}{4\log n} \leq \frac{s}{4}.
\end{eqnarray*}
In the above, the first $\leq$ is due to the choice of the $p_i$, namely we have 
chosen 
those $p_i$ to satisfy $\prod_{i\in[r]}p_i>n^4$. The second $\leq$ is due to 
item 
(a) we just described. The existence of such $i'\in[r]$ satisfying $n_{i'}\leq 
s/4$ 
then follows. 
\end{enumerate}

Let $i' \in [r]$ be an index satisfying (b) and let $p' = p_{i'}$. Let $s'$ be the lowest common multiple of these $\mathrm{deg}(f_{i})$ which are coprime to $p'$. Then
$\hat{A}=\tilde{A}^{q^{s'}-1}$ satisfies the following.
Firstly, 
$\hat{A}$ is not identity. This is because the existence of $C_{\tilde f_{i'}}$ 
where $\deg(\tilde f_{i'})=p_{i'}$ and Fact~\ref{fact:comp} (2). Secondly, 
$\hat{A}$ 
is of degree at most $s/4\leq \deg(A)/4$. %\ynote{Was: $\leq s/4$}. 
This is because for 
any $C_{\tilde f_i}$ with 
$p_{i'}\nmid 
\deg(\tilde f_i)$, $C_{\tilde f_i}^{q^{s'}-1}$ becomes identity by 
Fact~\ref{fact:comp} (2), and from (b) we know the sum of the 
sizes of such blocks is at least $(3s)/4$. This shows the existence of $m\in \N$ 
as required. 

%\ynote{Slightly edited here:} 
We now prove the statement of the furthermore part 
in Lemma \ref{lem:deg_red}. For 
this, it is sufficient to show that a ${(q-1)}^{k}$-power of $\hat{A}$, for some 
integer $k$, is not the identity matrix. Suppose otherwise. If the 
${(q-1)}^{k}$-power of $\hat{A}$ is the identity, then, in particular, the 
$(q^{s'}-1){(q-1)}^{k}$-power of the companion matrix $C_{\tilde f_i}$ is the 
identity. Since $C_{\tilde f_i}$ has order $q^{p'}-1$ by the assumption 
%\ynote{Added:} 
on the root orders of $f_i$ (taking the 
$p^{a}$-power of $A$ does not do harm), we must have $q^{p'}-1 \mid 
(q^{s'}-1){(q-1)}^{k}$. Since the greatest common divisor of $q^{p'}-1$ and 
$q^{s'}-1$ is $q-1$, it follows that $q^{p'}-1 \mid {(q-1)}^{k+1}$. By applying 
Claim \ref{that} with $p'$ in place of $t$ and noting that $p' > 2$, we arrive to 
a contradiction. 

\begin{claim}
\label{that}
Let $t$ be a prime and $q$ an integer larger than $1$. If $q^{t}-1$ divides some power of $q-1$, then $t=2$.
\end{claim}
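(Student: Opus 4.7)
The plan is to exploit the factorisation $q^{t}-1 = (q-1)\,\Phi_{t}(q)$ where $\Phi_{t}(q) := 1 + q + q^{2} + \cdots + q^{t-1}$ is the value of the $t$-th cyclotomic polynomial at $q$ (here $t$ is prime). Since $q^{t}-1$ is assumed to divide some power of $q-1$, so does $\Phi_{t}(q)$. In particular, every prime divisor of $\Phi_{t}(q)$ is a prime divisor of $q-1$. The key elementary observation is that modulo $q-1$ one has $q \equiv 1$, hence $\Phi_{t}(q) \equiv t \pmod{q-1}$. Consequently any prime $p$ dividing both $\Phi_{t}(q)$ and $q-1$ must divide $t$, forcing $p=t$. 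Thus $\Phi_{t}(q)$ is a pure power of $t$.

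Since $\Phi_{t}(q) \ge 1 + q > 1$, this in turn forces $t \mid \Phi_{t}(q)$, and therefore $t \mid q-1$. Now comes the place where the hypothesis $t$ odd (i.e.\ $t\ne 2$) is exploited: for odd primes $t$ with $t \mid q-1$, the lifting-the-exponent lemma (or equivalently a direct binomial expansion of $(1 + t b)^{t}$ with $b = (q-1)/t$, using that $v_{t}\bigl(\binom{t}{i}\bigr)=1$ for $1\le i\le t-1$) yields
\[
v_{t}(q^{t}-1) \;=\; v_{t}(q-1) + 1,
\]
so that $v_{t}(\Phi_{t}(q)) = 1$. Combined with the fact that $\Phi_{t}(q)$ is a power of $t$, this gives $\Phi_{t}(q) = t$.

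Finally I would derive a contradiction from the trivial size estimate $\Phi_{t}(q) = 1 + q + \cdots + q^{t-1} \ge 1 + 2 + 4 + \cdots + 2^{t-1} = 2^{t}-1$, which exceeds $t$ for every $t \ge 2$ and $q \ge 2$. Therefore $t$ cannot be an odd prime, so $t = 2$ as required. The only non-routine step is the valuation computation in step three; everything else is a short congruence argument. (Note that $t = 2$ genuinely occurs, e.g.\ $q = 3$, $q^{2}-1 = 4 \mid (q-1)^{2} = 4$, confirming that the conclusion $t=2$ is sharp.)
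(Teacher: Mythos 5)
Your proof is correct and follows essentially the same route as the paper's: both first show that every prime divisor of $\Phi_t(q)=(q^t-1)/(q-1)$ must equal $t$ (via $\Phi_t(q)\equiv t\pmod{q-1}$), then deduce $t\mid q-1$, and then derive a contradiction for odd $t$ by a binomial/valuation computation of $\Phi_t(q)$ modulo $t^2$. The only difference is presentational: you phrase the valuation step as lifting-the-exponent and conclude $\Phi_t(q)=t$ before invoking the size bound $\Phi_t(q)\ge 2^t-1>t$, whereas the paper establishes $\Phi_t(q)=t^s$ with $s\ge 2$ first and then obtains the contradiction from the mod-$t^2$ congruence.
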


\begin{proof}
Notice that the condition $q^{t}-1$ divides some power of $q-1$ is equivalent to the condition that every prime divisor of $q^{t}-1$ divides $q-1$. 

We claim that $\frac{q^{t}-1}{q-1} = t^{s}$ for some integer $s \geq 2$. Let $r$ be a prime divisor of $(q^{t}-1)/(q-1)$. Then $r$ divides $q-1$ by our condition and, since $q^{t-1} + \cdots + q +1$ is congruent to $t$ modulo $q-1$, the primes $r$ and $t$ must be equal. This proves that $\frac{q^{t}-1}{q-1} = t^{s}$ for some integer $s \geq 1$. We also have $s \geq 2$ by $q > 1$. 

On the other hand, $$\frac{q^{t}-1}{q-1} = \frac{{((q-1)+1)}^{t}-1}{q-1} = 
\sum_{k=1}^{t} \binom{t}{k} {(q-1)}^{k-1}$$ is congruent to ${(q-1)}^{t-1} + t$ 
modulo $t^{2}$, %\ynote{Added:} 
as the intermediate terms 
$\binom{t}{k}(q-1)^{k-1}$, $1<k<t$, are divisible by $t^2$ if they do appear. 
Since $t^{2}$ does not 
divide $t$, it cannot divide ${(q-1)}^{t-1}$ either. This forces $t = 2$ as $t$ 
divides $q-1$ by our condition.   
\end{proof}
This concludes the proof of Lemma~\ref{lem:deg_red}.
\end{proof}

%\begin{remark}
%\label{remark}
%It is interesting to compare the degree reduction lemmas, namely \cite[Lemma 
%3]{BS87}, \cite[Lemma 4.4 (ii)]{BY}, and Lemma~\ref{lem:deg_red} here. It is 
%well-known that there are certain similarities between combinatorics and linear 
%algebra over finite fields, 
%e.g. sets vs linear spaces, binomial coefficients vs Gaussian binomial 
%coefficients, and symmetric groups vs general linear groups, etc.. One way to 
%understand \cite[Lemma 3]{BS87}, as explicitly explained in \cite{BS_lemma}, is 
%the 
%following: if a permutation $\pi$ is of order larger than $n^4$, then some power 
%of $\pi$ is a non-identity permutation of degree (e.g. the size of the support) 
%smaller than that of $\pi$ divided by $4$.
%%\ynote{Was: $\deg(\pi)/4$.}
%Therefore, to generalise \cite[Lemma 3]{BS87} to the 
%linear 
%algebraic setting, a first attempt would be to consider the order of $A\in 
%\GL(n, q)$, as what Biswas and Yang did in their \cite[Lemma 4.4 (ii)]{BY}. 
%On the other hand, what we essentially did in Lemma~\ref{lem:deg_red} is to use 
%the extension degree of the splitting field of $\charp(A)$ to play the role of 
%the 
%order of $\pi$. This is a connection between the cycle types of a 
%permutation and the multiset of degrees of the irreducible factors of 
%$\charp(A)$. 
%For a survey on this analogy and on related topics see \cite{Fulman}.  
%\end{remark}

\section{A structure theorem for completely reducible groups}

In this section we will prove Theorem \ref{completelyreducible} which, in the next section, will be used to deduce Theorem \ref{secondmain} (in case the group acts completely reducibly on its module). 

Let us fix some notation. Let $V$ be the vector space of dimension $n$ over $\F_q$. Let $G$ be a subgroup of $\mathrm{GL}(V)$ acting completely reducibly on $V$. The $G$-module $V$ is the direct sum $V_{1} \oplus \cdots \oplus V_{m}$ of irreducible $G$-modules $V_{i}$ with $1 \leq i \leq m$. It is natural to write each vector space $V_{i}$ as a direct sum $W_{i1} \oplus \cdots \oplus W_{ik_{i}}$ of isomorphic vector 
spaces $W_{ij}$ with $1 \leq j \leq k_{i}$ such that $\{ W_{i1}, \ldots , W_{ik_{i}} \}$ is preserved by the action of $G$ and with $k_{i}$ as large as possible. It follows that for each pair $(i,j)$ the stabilizer of $W_{ij}$ in $G$ acts irreducibly and primitively (but not necessarily faithfully) on $W_{ij}$. 

To simplify notation, write the vector space $V$ as a direct sum $W_{1} \oplus \cdots \oplus W_{k}$ such that $G$ preserves $\Omega = \{ W_{1}, \ldots , W_{k} \}$, the stabilizer $G_{i}$ of $W_{i}$ in $G$ acts irreducibly and primitively on $W_{i}$ for each $i$ with $1 \leq i \leq k$ and $k = \sum_{i=1}^{m} k_{i}$ in the above notation. For each $i$ let the action of $G_{i}$ on $W_{i}$ be $P_{i}$. The group $G$ is a subgroup of $(P_{1} \times \cdots \times P_{k}):\mathrm{S}_{k}$. Let $N$ denote the intersection of $G$ with $P_{1} \times \cdots \times P_{k}$, that is, the kernel of the action of $G$ on $\Omega$. The factor group $G/N$ may be viewed as a subgroup of $\mathrm{S}_{k} \leq \mathrm{S}_{n}$. 

We continue with a slightly simplified version of \cite[Proposition 5.7]{JP}. Here a {\it quasisimple} group is a finite perfect group $H$ such that $H/Z(H)$ is simple.   

\begin{theorem}[Jaikin-Zapirain, Pyber; 2011]
\label{theoremJP0}
Let $Q$ be a subgroup of $\mathrm{GL}(W)$ with $Q$ acting irreducibly and primitively on the finite vector space $W$ defined over the prime field $\F_p$. For the generalized Fitting subgroup $F^{*}(Q)$ of $Q$ let $F$ be the field $Z(\mathrm{End}_{F^{*}(Q)}(W))$. There exists a universal constant $c_{6}$ such that whenever $|Q| > {|W|}^{c_{6}}$, then
\begin{enumerate}
\item[(i)] there is a tensor product decomposition $U' \otimes_{F} U$ of $W$ such that $\dim(U) \geq \dim(U')$;

\item[(ii)] there is a quasisimple normal subgroup $R$ in $Q$ isomorphic to $\mathrm{A}_{\ell}$ or to a classical group $\mathrm{Cl}(d,K)$ for some $K \leq F$;

\item[(iii)] if $R = \mathrm{A}_{\ell}$, then $U$ is the natural $\mathrm{A}_{\ell}$-module, while if $R = \mathrm{Cl}(d,K)$, then $U$ is $F \otimes_{K} U''$ where $U''$ is the natural $\mathrm{Cl}(d,K)$-module;

\item[(iv)] $|Q/R| \leq {|W|}^{5}$.
\end{enumerate}
\end{theorem}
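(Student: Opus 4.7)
The plan is to view this essentially as a citation of the Jaikin-Zapirain--Pyber paper, since the statement rests on several deep classical ingredients. First I would set up Clifford theory: as $Q$ acts irreducibly and primitively on $W$, the generalized Fitting subgroup $F^{*}(Q)=E(Q)Z(F(Q))$ acts homogeneously on $W$, and by Schur's lemma the centraliser of $F^{*}(Q)$ in $\mathrm{End}(W)$ is a field $F\supseteq \F_p$, which is exactly the $F=Z(\mathrm{End}_{F^{*}(Q)}(W))$ appearing in the statement. The module $W$ thus acquires an $F$-structure, and its $F$-dimension is $\dim_{\F_p}W/[F:\F_p]$.

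Next I would decompose the module. The layer $E(Q)$ is a central product of quasisimple components which $Q$ permutes; primitivity rules out honestly imprimitive actions, and Clifford's tensor decomposition theorem then yields $W\cong U'\otimes_{F} U$ as $F$-spaces, where $U$ is an absolutely irreducible module for a single quasisimple factor and $U'$ encodes both the remaining components (via a wreath/tensor construction) and any multiplicity. After swapping tensor factors if necessary, we may assume $\dim U\geq \dim U'$, giving (i).

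The substantive part is identifying $R$ and $U$. A standard bound gives $|Q/F^{*}(Q)|\leq |W|^{O(1)}$, since $\mathrm{Out}(F^{*}(Q))$ acting faithfully on $W$ has polynomial order in $|W|$ (and the solvable part $F(Q)$ acting faithfully and homogeneously is likewise controlled). Consequently the hypothesis $|Q|>|W|^{c_{6}}$ forces $E(Q)$, and in particular some quasisimple factor $R$ of $E(Q)$, to have order at least a prescribed polynomial in $|W|$. Applying the Landazuri--Seitz--Zalesskii lower bounds on the minimal degrees of faithful projective representations of quasisimple groups now excludes exceptional groups of Lie type and sporadic groups, as well as all non-natural representations of classical and alternating groups, because these would have $|R|$ too small relative to $\dim_F U$. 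What remains is $R\cong \mathrm{A}_{\ell}$ acting on its natural permutation module, or $R\cong \mathrm{Cl}(d,K)$ acting on $F\otimes_{K} U''$ for its natural module $U''$ over some subfield $K\leq F$, yielding (ii) and (iii). Bound (iv) then follows from $|Q/F^{*}(Q)|\leq |W|^{O(1)}$ combined with the observation that the remaining components of the layer contribute at most $|W|^{O(1)}$ to $|Q|$, since each carries a faithful module inside $W$.

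The main obstacle is the Landazuri--Seitz--Zalesskii dichotomy: converting a polynomial lower bound on $|R|$ into the rigid conclusion that $U$ is literally the natural module, together with correctly identifying the subfield $K$. This requires a careful case analysis over all families of quasisimple groups using their known minimal faithful degrees, and is what pins down the constant $c_{6}$. Since Proposition~5.7 of \cite{JP} carries this out in detail and tracks the constants, the cleanest route in practice is simply to invoke that proposition rather than reproduce the argument here.
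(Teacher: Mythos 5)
Your proposal matches the paper's approach: the paper states this result as ``a slightly simplified version of \cite[Proposition 5.7]{JP}'' and gives no proof, exactly as you conclude. Your sketch of the underlying Clifford-theoretic and Landazuri--Seitz--Zalesskii machinery is a reasonable account of what \cite{JP} does, but since the paper simply cites the result, your decision to invoke rather than reproduce is the intended one.
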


Let $P$ be a subgroup of $\mathrm{GL}(W)$ acting irreducibly and primitively on the finite vector space $W$ defined over the field $\F_q$ (possibly different from its prime field $\F_p$). It centralizes a cyclic subgroup $Z$ of $\mathrm{GL}(W)$ isomorphic to $\F_{q}^{*}$. According to a claim of Liebeck and Shalev (see \cite[p. 112]{LiebeckShalev}) $PZ$ acts irreducibly and primitively on $W$ viewed over the field $\F_p$. For the sake of completeness, we present a proof for this fact. If $U$ is a $PZ$-invariant subspace of $W$, then $U$ must be an $\F_q$-space. Thus $PZ$ acts irreducibly on $W$. Let $W = W_{1} + \cdots + W_{t}$ be an imprimitivity decomposition of the $PZ$-module $W$ over $\F_p$ where $t > 1$. Let $Z_{0}$ be the stabilizer of $W_1$ in $Z$. Clearly $Z_{0} < Z$ since otherwise the $W_i$ are $\F_q$-spaces contradicting the fact that $P$ acts primitively on $W$ viewed over $\F_q$. Let $z$ be an element of $Z$ mapping $W_1$ to $W_2$ and let $w_{1}$ be a non-zero vector in $W_1$. Consider the element $1+z$ inside $\F_q$. Since $z \not= -1$, $1+z \in Z$ and $w_{1}(1+z) = w_{1} + w_{1}z \in W_{1} + W_{2}$. Since $w_{1} \not= 0$, the element $w_{1}(1+z)$ is neither in $W_{1}$ nor in $W_{2}$. This is a contradiction.  

As a Corollary to Theorem \ref{theoremJP0} we obtain the surprising fact that primitive linear groups are not far from being simple groups.

\begin{theorem}
\label{theoremJP}
If $P$ is a subgroup of $\mathrm{GL}(W)$ with $P$ acting irreducibly and primitively on the finite vector space $W$ defined over the field $\F_q$ with $|P| > {|W|}^{c_{6}}$, then there is a quasisimple normal subgroup $R$ in $P$ isomorphic to $\mathrm{A}_{\ell}$ such that $\ell \leq \dim_{\F_q}(W)$ or to a classical group $\mathrm{Cl}(d,r)$ such that $d \leq \dim_{\F_q}(W)$ with $\F_{r}$ and $\F_{q}$ of the same characteristic, and $|P/R| \leq {|W|}^{5}$. Moreover, if $R$ is isomorphic to $\mathrm{Cl}(d,r)$, then $r^{d} \leq |W|$.
\end{theorem}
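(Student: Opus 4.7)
The plan is to reduce Theorem \ref{theoremJP} to the already stated Theorem \ref{theoremJP0} by passing from $P$ to the group $Q = PZ$ and from the field $\F_q$ to its prime subfield $\F_p$. By the fact proved in the paragraph preceding the statement, $Q$ acts irreducibly and primitively on $W$ regarded as an $\F_p$-vector space. Since $|Q| \geq |P| > |W|^{c_6}$ and the cardinality $|W|$ does not depend on the field of scalars, we are entitled to apply Theorem \ref{theoremJP0} to $Q$. This yields a tensor decomposition $W = U' \otimes_{F} U$ with $\dim U \geq \dim U'$, where $F = Z(\mathrm{End}_{F^*(Q)}(W))$, together with a quasisimple normal subgroup $R$ of $Q$ isomorphic to either $\mathrm{A}_{\ell}$ or $\mathrm{Cl}(d,K)$ for some $K \leq F$, and with $|Q/R| \leq |W|^{5}$.

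I would then verify three structural points. First, $F \supseteq \F_q$: the subgroup $Z$ lies in the center of $Q$, commutes with $F^*(Q)$, and acts on $W$ by $\F_q$-scalar multiplication, hence $Z$ sits inside the center of the endomorphism ring and its $\F_p$-linear span contains $\F_q$. Second, $R \leq P$: because $Q/P$ is a quotient of the cyclic group $Z$, it is abelian, and the image of the perfect group $R$ in an abelian quotient must be trivial. Third, having $R \leq P \leq Q$ at hand, the bound $|P/R| \leq |Q/R| \leq |W|^{5}$ is immediate.

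Next I would extract the dimension bounds from the tensor decomposition. In the alternating case, $U$ is the natural $\mathrm{A}_{\ell}$-module over $F$, so $\dim_F U \geq \ell$ when $U$ is taken as the natural permutation module (or the argument can be adjusted with a constant for the deleted permutation module); in either case this gives $\dim_{\F_q} W \geq [F:\F_q] \dim_F W \geq \ell$. In the classical case, $U = F \otimes_{K} U''$ with $U''$ the natural $\mathrm{Cl}(d,K)$-module, so $\dim_F U = d$ and therefore $\dim_{\F_q} W \geq [F:\F_q]\, d \geq d$. The field $K$ has characteristic $p$ because $K \leq F$ and $F$ has characteristic $p$, matching the characteristic of $\F_q$. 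Finally, setting $r = |K|$, the inequality $r^d = |K|^d \leq |F|^d = |U| \leq |W|$ gives the last claim of the theorem.

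The main obstacle I anticipate is not in any single step but in making sure that the passage from the prime field $\F_p$ back to $\F_q$ is clean: in particular that $F$ genuinely contains $\F_q$ (so that the classical groups and alternating modules appearing in Theorem \ref{theoremJP0} live over fields comparable to $\F_q$) and that the normal subgroup $R$ produced in $Q$ descends to a normal subgroup of $P$. Once the observation that $Q/P$ is abelian is in place, the perfectness of $R$ handles the second point cleanly, and the scalar action of $Z$ handles the first.
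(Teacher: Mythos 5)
Your proposal is correct and follows essentially the same route as the paper: apply Theorem \ref{theoremJP0} to $Q = PZ$ viewed over the prime field (using the primitivity claim established just before the theorem), deduce $R \leq P$ from the perfectness of $R$ and the fact that $PZ/P$ is abelian (the paper writes $R = [R,R] \leq [PZ,PZ] \leq P$), and read off the dimension, characteristic, and order bounds from the containment $\F_q \leq F$. You fill in a few details the paper glosses over, notably the justification that $\F_q \leq F$ via the scalar action of $Z$ inside $Z(\mathrm{End}_{F^*(Q)}(W))$.
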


\begin{proof}
By Theorem \ref{theoremJP0} and the claim of Liebeck and Shalev (see the paragraph after Theorem \ref{theoremJP0}), there is a quasisimple normal subgroup $R$ in $PZ$ isomorphic to $\mathrm{A}_{\ell}$ such that $\ell \leq \dim_{\F_q}(W)$ or to a classical group $\mathrm{Cl}(d,r)$ such that $d \leq \dim_{\F_q}(W)$ (the bounds for $\ell$ and $d$ follow from the fact that the field $F$ in Theorem \ref{theoremJP0} contains $\F_{q}$) and $\F_{r}$ and $\F_{q}$ have the same characteristic. In the latter case we have $r^{d} \leq |W|$ by Theorem \ref{theoremJP0}. It also follows that $|PZ/R| \leq {|W|}^{5}$. Since $R$ is quasisimple, $R = [R,R] \leq [PZ,PZ] \leq P$. This completes the proof of the theorem.  
\end{proof}

We are now in position to prove our structure theorem.  

\begin{theorem}
\label{completelyreducible}
Let $V$ be a vector space of dimension $n$ over the field $\F_q$. Let $G \leq \mathrm{GL}(V)$ be a group acting completely reducibly on $V$. Write $V$ as a direct sum $W_{1} \oplus \cdots \oplus W_{k}$ of (non-trivial) subspaces of $V$ in such a way that $G$ permutes the set $\Omega = \{ W_{1}, \ldots , W_{k} \}$ and the stabilizer of each $W_i$ in $G$ acts primitively on $W_{i}$ for every $i$ with $1 \leq i \leq k$. Let $N$ be the kernel of the action of $G$ on $\Omega$. In particular, $G/N$ may be viewed as a subgroup of $\mathrm{S}_n$. There exists a constant $c_{7}$ such that whenever $|N| > {|V|}^{c_{7}}$, 
\begin{enumerate}
\item[(i)] there is a normal subgroup $C$ of $G$ contained in $N$ such that $C = Q_{1} \circ \cdots \circ Q_{w}$ is a central product of quasisimple groups $Q_{i}$ with $w \leq k$;

\item[(ii)] each $Q_i$ has a factor group $T_i$ such that for some $j\in\{1, 
\dots, k\}$, $T_{i}$ is an 
alternating group $\mathrm{A}_{\ell_{j}}$ with $\ell_j\leq 
\dim_{\F_q}(W_{j})$, or $T_{i}$ is a classical simple group $\mathrm{Cl}(d_{j},r_{j})$ such that 
$\F_{r_{j}}$ and $\F_{q}$ have the same characteristic, 
$d_{j} \leq \dim_{\F_q}(W_{j})$ and $r_{j}^{d_{j}} \leq |W_{j}|$;

%\item[(iii)] with a possible relabeling of the $W_i$, for every $i$ the group $T_i$ may be viewed as a subgroup of $\mathrm{GL}(W_{i})$ and %both $\ell_{i}$ and $d_{i}$ are at most $\dim_{\F_q}(W_{i})$ in the respective cases, also $r_{i}^{d_{i}} \leq |W_{i}|$ in the latter case;

\item[(iii)] $|N/C| \leq {|V|}^{c_{7}}$.
\end{enumerate}
\end{theorem}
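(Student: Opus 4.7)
The plan is to analyze the subdirect product embedding $N \hookrightarrow \prod_i P_i$, where $P_i$ is the image of the stabilizer $G_i$ of $W_i$ acting irreducibly and primitively on $W_i$, and to take $C$ to be the product of the ``large'' components of $N$ coming from Theorem~\ref{theoremJP}. Write $\pi_i : N \to P_i$ for the projection; since $N \leq G_i$ and $N \trianglelefteq G$, the image $N^{(i)} := \pi_i(N)$ is normal in $P_i$, and $N$ embeds as a subdirect product of the $N^{(i)}$.

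By Theorem~\ref{theoremJP} applied to each $P_i$, either $|P_i| \leq |W_i|^{c_6}$ (the \emph{small} case), or there is a normal quasisimple subgroup $R_i \trianglelefteq P_i$ with $|P_i/R_i| \leq |W_i|^5$ whose simple quotient $T_i$ has the type and dimension required by~(ii). In the large case $R_i \cap N^{(i)} \trianglelefteq R_i$, so by quasisimplicity either $R_i \leq N^{(i)}$ (call $i$ \emph{essential}, let $I$ denote the set of these) or $R_i \cap N^{(i)} \leq Z(R_i)$, in which case $|N^{(i)}| \leq |P_i/R_i| \cdot |Z(R_i)| \leq |W_i|^{O(1)}$. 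One also checks that $|C_{P_j}(R_j)| \leq |W_j|^{O(1)}$, so that any nontrivial quasisimple subnormal subgroup of $N^{(j)}$ either equals $R_j$ or has order bounded polynomially in $|W_j|$. Consequently $|N^{(i)}| \leq |W_i|^{O(1)}$ for every $i \notin I$.

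Define $C$ to be the subgroup of $N$ generated by all quasisimple subnormal subgroups $Q$ of $N$ with $\pi_j(Q) = R_j$ for some $j \in I$ (the \emph{large} components). These are components of $N$, hence commute pairwise and form a central product; moreover $C$ is $G$-invariant because $G$ permutes the essential indices and the corresponding $R_j$'s via its action on $\Omega$. Each large component has simple quotient isomorphic to some $T_j$ with $j \in I$, giving~(ii). For~(i), two distinct large components $Q, Q'$ cannot both satisfy $\pi_j(Q) = \pi_j(Q') = R_j$ for the same $j$: the commuting copies of $R_j$ inside $R_j$ would have to lie in $C_{R_j}(R_j) = Z(R_j)$, contradicting quasisimplicity. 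Hence distinct large components correspond to disjoint essential supports in $I$, forcing $w \leq |I| \leq k$.

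For~(iii), the map $\phi : N \to \prod_{i \in I} N^{(i)}/R_i \times \prod_{i \notin I} N^{(i)}$ has image of size at most $\prod_i |W_i|^{O(1)} \leq |V|^{O(1)}$, so its kernel $K$ has index $|N/K| \leq |V|^{O(1)}$ in $N$. Now $K$ embeds into $\prod_{i \in I} R_i$, and the further projection to $\prod_{i \in I} T_i$ has image $\bar K$ a subgroup of a direct product of non-abelian simples. By a Goursat-type argument, $\bar K$ is a direct product of diagonally embedded copies of simple groups, each of which pulls back to a large component of $K$, hence of $N$ (since $K \trianglelefteq N$). Therefore $C$ surjects onto $\bar K$, so $K/C$ embeds into $K \cap \prod_{i \in I} Z(R_i)$, which has size $\leq |V|^{O(1)}$. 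Combining yields $|N/C| = |N/K| \cdot |K/C| \leq |V|^{O(1)}$. The main obstacle is the identification of large components in Step~3 --- ruling out small quasisimple subnormal subgroups of $N^{(j)}$ that could project to something other than a full $R_j$ --- together with the Goursat-type bookkeeping in Step~4 that reduces $K/C$ to a product of centers.
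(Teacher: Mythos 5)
Your argument is correct and follows essentially the same route as the paper's proof: both apply Theorem~\ref{theoremJP} to each primitive piece $P_i$, exploit the dichotomy that the image of a normal subgroup in $R_i$ is either all of $R_i$ or central, and then apply Scott's Lemma to a subdirect product of the simple quotients to identify the components generating $C$. The paper organizes the bookkeeping via $M=N\cap(R_1\times\cdots\times R_t)$ and its commutator subgroup $M'$ (with $C$ defined as the components of $G$ inside $M'$), while you use the kernel $K$ of $\phi$; two small repairs are needed in your final step: the line ``$|N/C|=|N/K|\cdot|K/C|$'' should read $|N/C|\le|N/K|\cdot|K/(K\cap C)|$ since $C$ need not lie inside $K$, and before invoking Scott's Lemma one must restrict to those $i\in I$ with $\pi_i(K)=R_i$, since for some $i\in I$ one could instead have $\pi_i(K)\le Z(R_i)$ and then $\bar K$ projects trivially to $T_i$ (both fixes are routine and the estimate $|K/(K\cap C)|\le|K\cap\prod_{i\in I}Z(R_i)|\le|V|^{O(1)}$ goes through).
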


\begin{proof}
Let $c_{7}$ be the maximum of $6$ and $c_{6}$. Without loss of generality, we may assume that there is an integer $t \geq 0$ such that $|P_{i}| > {|W_{i}|}^{c_{7}}$ for every $i$ with $i \leq t$ and $|P_{i}| \leq {|W_{i}|}^{c_{7}}$ for every $i$ with $t < i \leq k$. For every $i$ with $i \leq t$, let $R_{i}$ be the quasisimple normal subgroup of $P_{i}$ whose existence is assured by Theorem \ref{theoremJP} (and is $R$ in that notation). 

If $N$ denotes the intersection of $G$ with $P_{1} \times \cdots \times P_{k}$, that is, the kernel of the action of $G$ on $\Omega$, then the factor group $G/N$ may be viewed as a subgroup of $\mathrm{S}_{k} \leq \mathrm{S}_{n}$. In order to prove the theorem, we may assume that $|N| > {|V|}^{c_{7}}$. In particular, $t>0$. 

Let $M$ be the normal subgroup of $G$ defined to be the intersection of $N$ and $R_{1} \times \cdots \times R_{t}$. Since the natural projection $M_{i}$ of $M$ to $P_{i}$ is normal in $P_{i}$, the group $M_{i}$ must also be normal in $R_{i}$. Since $R_{i}$ is quasisimple, $M_{i} = R_{i}$ or $M_{i}$ is central in $R_{i}$. In the latter case $|M_{i}| < |W_{i}|$. Without loss of generality, we may assume that there is a $u \geq 0$ such that $M_{i} = R_{i}$ for every index $i$ at most $u$ and $M_{i}$ is abelian for $i > u$. Thus the commutator subgroup $M'$ may be viewed as a subgroup of $R_{1} \times \cdots \times R_{u}$ where $u \geq 0$ which projects onto $R_{i}$ for every $i$ with $i \leq u$. Clearly, $|N/M'| \leq {|V|}^{c_{7}}$ by Theorem \ref{theoremJP}. 

We may thus assume that $M' \not= 1$, that is, $u \geq 1$. Now $M'/Z(M')$ may be viewed as a subgroup of $F_{1} \times \cdots \times F_{u}$ where $F_{i} = R_{i}/Z(R_{i})$ is a non-abelian simple group for every $i$ with $1 \leq i \leq u$. Moreover, $M'/Z(M')$ projects onto every $F_i$. It follows, by \cite[p. 328, Lemma]{S}, that $M'/Z(M')$
is a direct product $\prod_{j=1}^{w} D_j$ of full diagonal subgroups $D_{j}$ of subproducts $\prod_{i \in I_{j}} F_{i}$ where the $I_{j}$ form a partition of $\{1, \ldots , u \}$. The preimage in $M'$ of any simple factor $D_{j}$ of $M'/Z(M')$ contains a normal quasisimple subgroup of $M'$ which is subnormal in $G$. Let $C$ be the product of all components, that is, all subnormal quasisimple subgroups, of $G$ contained in the group $M'$. Since any two distinct components in a finite group commute, $C$ may be expressed in the form $Q_{1} \circ \cdots \circ Q_{w}$ where the $Q_{j}$ are components of $G$ contained in $M'$. 

The group $C$ is normal in $G$ and so (i) is established. Since $C \cdot Z(M') = M'$, it is easy to see that there is a refinement of our previous bound for $|N/M'|$ in the form $|N/C| \leq {|V|}^{c_{7}}$. This is (iii). 

Fix an index $i$ at most $w$. The component $Q_{i}$ projects onto $F_{j}$ for some $j$ at most $u$. The group $F_{j}$ is isomorphic to $\mathrm{A}_{\ell_{j}}$ such that $\ell_{j} \leq \dim_{\F_q}(W_{j})$ or to a classical simple group $\mathrm{Cl}(d_{j},r_{j})$ such that $d_{j} \leq \dim_{\F_q}(W_{j})$, $r_{j}^{d_{j}} \leq |W_{j}|$, and $\F_{r_{j}}$ and $\F_{q}$ have the same characteristic, by Theorem \ref{theoremJP}. Thus $Q_{i}$ has a factor group $T_{i}$ such that $T_{i}$ is $\mathrm{A}_{\ell_{j}}$ or $T_{i}$ is the classical simple group $\mathrm{Cl}(d_{j},r_{j})$. This gives (ii).

%Fix an $i$ at most $t$. The group $R_{i}$ is isomorphic to $\mathrm{A}_{\ell_{i}}$ such that $\ell_{i} \leq \dim_{\F_q}(W_{i})$ or to a %classical group $\mathrm{Cl}(d_{i},r_{i})$ such that $d_{i} \leq \dim_{\F_q}(W_{i})$ and $\F_{r_{i}}$ and $\F_{q}$ have the same %characteristic, by Theorem \ref{theoremJP}. The quasisimple group $Q_i$, by the above, has a factor group $T_{i}$ such that $T_{i}$ is %isomorphic to $Q_{i} = \mathrm{A}_{\ell_{i}}$ or $T_{i}$ is the classical group $\mathrm{Cl}(d_{i},r_{i})$. This gives (ii). 

%With a possible relabeling of the $W_i$, for every $j$ the group $Q_{j} \leq D_{j}$ leaves $W_{j}$ invariant and (the suitable factor group) %$T_{j}$ may be considered as a subgroup of $\mathrm{GL}(W_{j})$. Suppose that $T_i$ is a classical group $\mathrm{Cl}(d_{i},r_{i})$ for some %$i$. Then $r_{i}^{d_{i}} \leq |W_{i}|$ by Theorem \ref{theoremJP}. This gives (iii).   

This completes the proof of the theorem.
\end{proof}

\section{A bound for $\mathrm{diam}(G)$ for $G$ a linear group}

In this section we prove Theorem \ref{secondmain}. 

A main tool in our argument is Lemma 5.1 of Babai and Seress \cite{BS}.

\begin{lemma}[Babai, Seress; 1992]
\label{BabaiSeressLemma}
If $N$ is a non-trivial, proper normal subgroup in a finite group $G$, then $\mathrm{diam}(G) \leq 4 \cdot \mathrm{diam}(N) \cdot \mathrm{diam}(G/N)$.
\end{lemma}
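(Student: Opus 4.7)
The plan is a direct application of Schreier's lemma with short coset representatives. Fix an arbitrary symmetric generating set $S$ of $G$, and set $d_1 := \mathrm{diam}(G/N)$ and $d_2 := \mathrm{diam}(N)$; because $N$ is non-trivial and proper, both $d_1, d_2 \geq 1$, a fact that will be crucial at the final step. The image $\bar S$ of $S$ in $G/N$ generates $G/N$ with diameter at most $d_1$, so for every coset $Ng$ one can select a representative $r(Ng)\in Ng$ expressible as a product of at most $d_1$ elements of $S$.

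The next step is to invoke Schreier's lemma: the set $T := \{\,r(Ng)\,s\,r(Ngs)^{-1} : g\in G,\ s\in S\,\}$ generates $N$, and by construction each element of $T$ is a product of at most $2d_1+1$ elements of $S$. (Symmetry of $T$ follows from symmetry of $S$, since $(r(Ng)\,s\,r(Ngs)^{-1})^{-1}=r(Ngs)\,s^{-1}\,r(Ngs\cdot s^{-1})^{-1}$ has the required form.) Since $T\subseteq N$ is a symmetric generating set of $N$, every $n\in N$ admits a $T$-expression of length at most $d_2$ and therefore an $S$-expression of length at most $(2d_1+1)\,d_2$.

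To conclude, any $g\in G$ factors as $g=n\cdot r(Ng)$ with $n\in N$, which gives $g$ an $S$-expression of length at most $(2d_1+1)d_2+d_1$. Bounding this by $4d_1d_2$ reduces to the inequality $d_1+d_2\leq 2d_1d_2$, which holds whenever $d_1,d_2\geq 1$. Since $S$ was arbitrary, this yields $\mathrm{diam}(G)\leq 4\cdot\mathrm{diam}(N)\cdot\mathrm{diam}(G/N)$. No substantive obstacle is expected; the only piece of bookkeeping to verify carefully is that each Schreier generator $r(Ng)\,s\,r(Ngs)^{-1}$ really does inherit $S$-length at most $2d_1+1$ by concatenating the $S$-words representing its three factors, and that the choice of representatives can actually be made so short (which follows immediately from $\bar S$ generating $G/N$).
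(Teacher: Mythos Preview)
Your proof is correct and is precisely the standard Schreier-generator argument. The paper itself does not prove this lemma; it simply quotes it as Lemma~5.1 of Babai--Seress~\cite{BS}. Your argument is essentially the one found there: choose short coset representatives using $\mathrm{diam}(G/N)$, form the Schreier generators of $N$ (each of $S$-length at most $2d_1+1$), bound their diameter in $N$ by $\mathrm{diam}(N)$, and combine. The only cosmetic point worth tidying is to state explicitly that you take $r(N)=1$, so that Schreier's lemma applies in its usual form; this costs nothing since the empty word has length $0\le d_1$.
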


Now let $G$ be a subgroup of $\mathrm{GL}(V)$ acting on the finite vector space $V$ of dimension $n$ over the field of size $q$ and characteristic $p$. In case $h \not= 1$ let $S$ be a classical (non-abelian) composition factor of $G$ defined over a field of characteristic $p$ such that $h = \mathrm{diam}(S)$. 

First assume that $G$ acts completely reducibly on $V$. In this case we rely on Theorem \ref{completelyreducible} to prove Theorem \ref{secondmain}.

We use the notation of Theorem \ref{completelyreducible}. Theorem 1.3 of Babai and Seress \cite{BS} implies that $\mathrm{diam}(G/N)$ is less than exponential in $n$. Thus, in order to establish our bound for $\mathrm{diam}(G)$, it is sufficient to show that $\mathrm{diam}(N) < {|V|}^{O(1)}h^{2} < q^{O( n {(\log n)}^{2})}$ by Lemma \ref{BabaiSeressLemma}. This is certainly true in case $|N| \leq {|V|}^{c_{7}}$. Thus assume that $|N| > {|V|}^{c_{7}}$. Let $C$ be the normal subgroup of $G$, as in Theorem \ref{completelyreducible}, such that $|N/C| < {|V|}^{c_{7}}$. It follows by Lemma \ref{BabaiSeressLemma} that it is sufficient to show that $$\mathrm{diam}(C) < {|V|}^{O(1)}h^{2} < q^{O( n {(\log n)}^{2})}.$$ This paragraph also shows that $h < {|V|}^{O(1)}$ or $S$ is a composition factor of $C$. 

Since $C$ is normal in $G$, the center $Z(C) \leq \mathrm{GL}(V)$ of $C$ is an abelian group acting completely reducibly on $V$. By Schur's lemma and the fact that a finite division ring is a field, an abelian group $A \leq \mathrm{GL}(W)$ acting irreducibly on a finite vector space $W$ is cyclic and has order at most $|W|-1$. From this it follows that $|Z(C)| < |V|$. The factor group $C/Z(C)$ is the direct product of non-abelian simple groups each isomorphic to an alternating group or to a classical group in characteristic $p$. Let $A$ be the product of all factors of $C/Z(C)$ which are isomorphic to alternating groups, if such exist, otherwise let $A = 1$. Let $B$ be the product of all other simple factors of $C/Z(C)$, that is, $C/Z(C) = A \times B$. Notice that it is sufficient to establish the bound $\mathrm{diam}(A \times B) < {|V|}^{O(1)}h^{2} < q^{O( n {(\log n)}^{2})}$. 

The sum of degrees of all simple factors in $A$, if such exist, is at most $n$ by Theorem \ref{completelyreducible}. Hence $A$ may be considered as a permutation group of degree at most $n$ and hence $\mathrm{diam}(A) < O(1) |V|$ by Theorem 1.3 of \cite{BS}. It is then sufficient to see that $\mathrm{diam}(B) < {|V|}^{O(1)}h^{2} < q^{O( n {(\log n)}^{2})}$, by Lemma \ref{BabaiSeressLemma}. 

We have $\mathrm{diam}(B) \leq 20 \ n^{3} \ h^2 < {|V|}^{O(1)}h^{2}$ by \cite[Lemma 5.4]{BS}. Thus it is sufficient to establish $h = q^{O( n {(\log n)}^{2})}$. We may assume by the above that $S$ is a composition factor of $C$ (and a direct factor of $B$). In this case $S$ is isomorphic to the non-abelian composition factor $S_{i}$ of some component $Q_{i}$ of $G$ (normal in $C$). The group $S_{i}$ is a simple classical group of dimension $d_{j}$ defined over the field $\F_{r_{j}}$, for some $j$. Thus $h = r_{j}^{O( d_{j} {(\log d_{j})}^{2} )}$ by Theorem \ref{firstmain}. Since $d_{j} \leq n$ and $r_{j}^{d_{j}} \leq q^{n}$, we conclude that $r_{j}^{O( d_{j} {(\log d_{j})}^{2} )} =  q^{O( n {(\log n)}^{2} )}$. 

This completes the proof of Theorem \ref{secondmain} when $G$ acts completely reducibly.

Now let $G$ be an arbitrary subgroup of $\mathrm{GL}(V)$. Let $O_{p}(G)$ denote the largest normal $p$-subgroup of $G$. The factor group $G/O_{p}(G)$ may be viewed as a completely reducible linear group acting on the direct sum of the composition factors of the $G$-module $V$. Thus $\mathrm{diam}(G/O_{p}(G)) < {|V|}^{O(1)} h^{2}  < q^{O( n {(\log n)}^{2})}$ by the above.

In order to complete the proof of Theorem \ref{secondmain}, it is sufficient, by Lemma \ref{BabaiSeressLemma}, to show that $\mathrm{diam}(P) < {|V|}^{O(1)}$ for every $p$-subgroup $P$ of $\mathrm{GL}(V)$. 

Let $Q$ be a $p$-group and $\mathcal{C}$ a normal chain in $Q$ such that every associated factor in the chain $\mathcal{C}$ is elementary abelian. Let $\ell(Q, \mathcal{C})$ be the length of the chain $\mathcal{C}$ and let $r(Q, \mathcal{C})$ be the maximum rank of the associated factors in $\mathcal{C}$. It is easy to see that 
\begin{equation}
\label{utolso}
\mathrm{diam}(Q) \leq 4^{\ell(Q, \mathcal{C}) - 1} \cdot {(p \cdot r(Q, \mathcal{C}))}^{\ell(Q, \mathcal{C})}
\end{equation}
using Lemma \ref{BabaiSeressLemma} and Lemma 5.2 of \cite{BS}.  

Let $m$ be the smallest power of $2$ which is larger than $n$. An arbitrary subgroup $P$ of $\mathrm{GL}(V)$ may be viewed as a subgroup of a Sylow $p$-subgroup $S$ of $\mathrm{GL}(m,q)$. We have $$\mathrm{diam}(P) \leq 4^{\ell(S, \mathcal{C}) - 1} \cdot {(p \cdot r(S, \mathcal{C}))}^{\ell(S, \mathcal{C})}$$ by (\ref{utolso}), for any chain $\mathcal{C}$ of normal subgroups in $S$ such that the associated factor groups are elementary abelian. There exists an elementary abelian normal subgroup $A$ in $S$ such that $|A| = q^{m^{2}/4}$ and $S/A$ is the direct product of two copies of a Sylow $p$-subgroup in $\mathrm{GL}(m/2,q)$. It follows, by induction on $m$, that there is a chain $\mathcal{C}$ of normal subgroups in $S$ such that (i) the associated factor groups are elementary abelian; (ii) the first group is $A$; (iii) $r(S, \mathcal{C}) = (m^{2}/4) \cdot \log_{p}q \leq n^{2} \cdot \log_{p}q$; and (iv) $\ell(S, \mathcal{C}) = 1+ \log_{2}m \leq 2 + \log_{2}n$. From this it follows that $\mathrm{diam}(P) < {|V|}^{O(1)}$.

This completes the proof of Theorem \ref{secondmain}.

%\begin{proof}[Proof of the second bound]
%Let $h$ be as in the statement of Theorem \ref{secondmain}. We may assume by the above that $G$ acts completely reducibly on $V$. We apply %Theorem \ref{completelyreducible} and its notation. Assume that $|N| > {|V|}^{c_{7}}$. We may also suppose that $h = \mathrm{diam}(S_{i})$ %for the non-abelian composition factor $S_{i}$ of some component $Q_{i}$ of $G$ (normal in $C$). The group $S_{i}$ is a simple classical %group of dimension $d_{j}$ defined over the field $\F_{r_{j}}$, for some $j$. Thus $h = r_{j}^{O( d_{j} {(\log d_{j})}^{2} )}$ by Theorem %\ref{firstmain}. Since $d_{j} \leq n$ and $r_{j}^{d_{j}} \leq q^{n}$, we conclude that $r_{j}^{O( d_{j} {(\log d_{j})}^{2} )} =  q^{O( n %{(\log n)}^{2} )}$. 
%\end{proof}


\begin{thebibliography}{30}

\bibitem{B} Babai, L. On the diameter of Eulerian orientations of graphs. Proceedings of the Seventeenth Annual ACM-SIAM Symposium on Discrete Algorithms, 822--831, ACM, New York, 2006.

\bibitem{BS87} Babai, L. and Seress, \'A. On the degree of transitivity of permutation groups: a short proof. \emph{J. Combin. Theory Ser. A} \textbf{45} (1987), no. 2, 310--315. 

\bibitem{BS1} Babai, L. and Seress, \'A. On the diameter of Cayley graphs of the symmetric group. \emph{J. Combin. Theory Ser. A} \textbf{49} (1988), no. 1, 175--179. 

%\bibitem{BS_lemma} Babai, L. and Seress, \'A. Element order versus minimal degree 
%in permutation groups: an old lemma with new applications. ArXiv:1401.0489.

\bibitem{BS} Babai, L. and Seress, \'A. On the diameter of permutation groups. \emph{European J. Combin.} \textbf{13} (1992), no. 4, 231--243. 

\bibitem{BY} Biswas, A. and Yang, Y. A diameter bound for finite simple groups of large rank. \emph{J. Lond. Math. Soc.} (2) \textbf{95} (2017), no. 2, 455--474. 

\bibitem{Bre} \verb+https://www.math.u-psud.fr/~breuilla/BreuillardICMtalk.pdf+.

\bibitem{BGT} Breuillard, E.; Green, B.; Tao, T. Approximate subgroups of linear groups. \emph{Geom. Funct. Anal.} \textbf{21} (2011), no. 4, 774--819. 

\bibitem{BT} Breuillard, E. and Tointon, M. C. H. Nilprogressions and groups with moderate growth. \emph{Adv. Math.} \textbf{289} (2016), 1008--1055. 

\bibitem{Erd32} Erd\H os, P. Beweis eines Satzes von Tschebyschef. \emph{Acta Litt. Sci. Szeged} \textbf{5} (1932), 194--198. 

%\bibitem{Fulman} Fulman, J. Random matrix theory over finite fields. \emph{Bull. Amer. Math. Soc. (N.S.)} \textbf{39} (2002), no. 1, 51--85.

\bibitem{Grigorchuk} Grigorchuk, R. I. On growth in group theory. \emph{Proceedings of the International Congress of Mathematicians}, Vol. I, II (Kyoto, 1990), 325--338, Math. Soc. Japan, Tokyo, 1991. 

\bibitem{Gromov} Gromov, M. Groups of polynomial growth and expanding maps. \emph{Inst. Hautes Études Sci. Publ. Math.} No. \textbf{53} (1981), 53--73.

%\bibitem{HW79} Hardy, G. H. and Wright, E. M. An introduction to the theory of numbers. Oxford University Press, New York, 1979.

\bibitem{Helfgott} Helfgott, H. A. Growth and generation in $SL_{2}(Z/pZ)$. \emph{Ann. of Math.} (2) \textbf{167} (2008), no. 2, 601--623.

\bibitem{He} Helfgott, H. A. Growth in linear algebraic groups and permutation groups: towards a unified perspective. ArXiv:1804.03049.

\bibitem{HS} Helfgott, H. A. and Seress, \'A. On the diameter of permutation groups. \emph{Ann. of Math.} (2) \textbf{179} (2014), no. 2, 611--658. 

\bibitem{JP} Jaikin-Zapirain, A. and Pyber, L. Random generation of finite and profinite groups and group enumeration. \emph{Ann. of Math.} (2) \textbf{173} (2011), no. 2, 769--814.

\bibitem{KMS} Kornhauser, D.; Miller, G.; Spirakis, P. Coordinating pebble motion on graphs, the diameter of permutation groups, and applications. Proceedings of the 25th IEEE Symposium on Foundations of Computer Science, Singer Island, FL, IEEE Computer Society Press, New York (1984), pp. 241--250.

\bibitem{LM} Lubotzky, A. and Mann, A. On groups of polynomial subgroup growth. 
\emph{Invent. Math.} \textbf{104} (1991), no. 3, 521--533. 

\bibitem{LS} Liebeck, M. W. and Shalev, A. Diameters of finite simple 
groups: sharp bounds and applications. \emph{Ann. of Math.} (2) \textbf{154} 
(2001), no. 2, 383--406.

\bibitem{LiebeckShalev} Liebeck, M. W. and Shalev, A. Bases of primitive linear groups. \emph{J. Algebra} \textbf{252} (2002), no. 1, 95--113.

\bibitem{Mal63} Mal'cev, A. I. Foundations of linear algebra. W. H. Freeman, San 
Francisco, Calif.-London 1963.

\bibitem{PSz} Pyber, L. and Szab\'o, E. Growth in finite simple groups of Lie type. \emph{J. Amer. Math. Soc.} \textbf{29} (2016), no. 1, 95--146.

\bibitem{S} Scott, L. L. Representations in characteristic $p$, The Santa Cruz Conference on Finite Groups (Univ. California, Santa Cruz, Calif., 1979) \emph{Proc. Sympos. Pure Math.}, Vol. \textbf{37}, Amer. Math. Soc., Providence, R.I., 1980, pp. 319--331. 

\bibitem{ShalomTao} Shalom, Y. and Tao, T. A finitary version of Gromov's polynomial growth theorem. \emph{Geom. Funct. Anal.} \textbf{20} (2010), no. 6, 1502--1547. 

\bibitem{Wilson1} Wilson, J. S. On the growth of residually soluble groups. \emph{J. London Math. Soc.} (2) \textbf{71} (2005), no. 1, 121--132. 

\bibitem{WilsonTalk} \verb+http://www.dipmat2.unisa.it/ischiagrouptheory/IGT2010/talks/Wilson.pdf+. 

\bibitem{Wilson2} Wilson, J. S. The gap in the growth of residually soluble groups. \emph{Bull. Lond. Math. Soc.} \textbf{43} (2011), no. 3, 576--582. 

\end{thebibliography}
\end{document}